\numberwithin{proposition}{section}
\numberwithin{setup}{section}
\numberwithin{statement}{section}
\numberwithin{conjecture}{section}
\numberwithin{definition}{section}
\newtheorem{theorem}{Theorem}
\numberwithin{theorem}{section}
\newtheorem{Lemma}{Lemma}
\numberwithin{Lemma}{section}
\newtheorem{corollary}{Corollary}
\numberwithin{corollary}{section}
\newtheorem{remark}{Remark}
\numberwithin{remark}{section}
\numberwithin{condition}{section}
\newcommand{\dv}{\text{d}}
\newcommand{\T}{\mathcal{T}}
\newcommand{\tr}{\mathrm{tr}}
\newcommand{\R}{\mathbb{R}}
\newcommand{\Z}{\mathbb{Z}}
\newcommand{\C}{\mathbb{C}}
\newcommand{\N}{\mathbb{N}}
\newcommand{\M}{\mathcal{M}}
\newcommand\emptyarg{{}\cdot{}}
\newcommand*{\rom}[1]{\expandafter\@slowromancap\romannumeral #1@}
\begin{document}

\title{Asymptotic expansions of the \\ Witten-Reshetikhin-Turaev Invariants of Mapping Tori I} 

\author{Jørgen Ellegaard Andersen and William Elbæk Petersen \footnote{Work supported in part by the center of excellence grant Center for Quantum Geometry of Moduli Spaces from the Danish National Research Foundation (DNRF95)}}

%\thanks{Work supported in part by the center of excellence grant Center for Quantum Geometry of Moduli Spaces from the Danish National Research Foundation (DNRF95)}

\maketitle 

\markboth{ Jørgen Ellegaard Andersen and William Elbæk Petersen}{Asymptotic expansions of the WRT Invariants of Mapping Tori} %$ $ \bigskip

%\bigskip
%
%   %Title
%   \centerline{\Large{Asymptotic expansion of the \newline Witten-Reshetikhin-Turaev Invariants of Mapping Tori I}} \vskip 0.5cm
%      
%
%      %Author
%      \bigskip\bigskip\centerline{\large by  Jørgen Ellegaard Andersen and William Elbæk Petersen
%      

\begin{abstract}
 In this paper we engage in a general study of the asymptotic expansion of the Witten-Reshetikhin-Turaev invariants  of mapping tori of surface mapping class group elements. We use the geometric construction of the Witten-Reshetikhin-Turaev TQFT via the geometric quantization of moduli spaces of flat connections on surfaces. We identify assumptions on the mapping class group elements that allow us to provide a full asymptotic expansion.  In particular, we show that our results apply to all pseudo-Anosov mapping classes on a punctured torus and show by example that our assumptions on the mapping class group elements are strictly weaker than hitherto successfully considered assumptions in this context. The proof of our main theorem relies on our new results regarding asymptotic expansions of oscillatory integrals, which allows us to go significantly beyond the standard transversely cut out assumption on the fixed point set. This makes use of Picard-Lefschetz theory for Laplace integrals.
\end{abstract}

%%%%   Leave the following commands
\pagestyle{myheadings}
%%%%%%%%%%%%%%%%%%%%%%%%%%%%%%%%%

\section{Introduction}

The Witten-Reshetikhin-Turaev TQFT $Z_{\text{WRT}}^{(k)}$ \cite{ReshetikhinTuraev90,ReshetikhinTuraev91,Turaev}, originally constructed  combinatorially using representation theory of the quantum group $U_q(\mathfrak{sl}(2))$ at a $(k+2)$-th root of unity $q,$ was motivated by Atiyah's axioms \cite{Atiyah} and Witten's  work \cite{Witten} on the Jones polynomial \cite{Jones85,Jones} and quantum Chern-Simons theory. This TQFT was subsequently realized through Skein-theory by Blanchet, Habegger, Masbaum and Vogel \cite{Blanchet00,BHMV92,BHMV}. Witten's original work proposed a gauge theoretic construction through geometric quantization of moduli spaces, and this was done independently by Axelrod-della Pietra-Witten \cite{APW}, and Hitchin \cite{Hitchin} through methods later generalized to the context of general Hitchin connections by the first named author et al. \cite{Andersen12,AndersenGammelgaardRoed,AndersenGammelgaard14,AR}. Witten further conjectured in \cite{Witten} that the gauge theoretic approach should be equivalent to a construction using conformal field theory (CFT). The CFT approach was realized by Tscuchia et al \cite{TUY}, and through work of Laszlo \cite{La1} and work of the first named author and Ueno \cite{AU1,AU2,AU3,AU4}, it is now known that all three approaches; the gauge theoretic, the CFT approach, and the combinatorial, are equivalent. Through Witten's path integral motivation for the WRT-TQFTs, it is expected that they should admit asymptotic expansions in the level $k$, and this is known as the asymptotic expansion conjecture. For a summary of results, see \cite{Andersen12Crelle,AndersenHimpel12,CharlesMarhcI,CharlesMarchII,Joerg} and the references therein. 

In this paper we study the asymptotic expansion of the WRT invariants of mapping tori with (special) colored links using the gauge theoretic construction of the WRT-TQFT, which was also used by the first named author to prove asymptotic faithfulness \cite{Andersen06}. We start by describing the so-called co-prime case. Let $\Sigma$ be the complement of a point $P$ on an oriented surface of genus at least two $\bar\Sigma$. Let $\Gamma_{\bar\Sigma,v_P}$ be the mapping class group of $(\Sigma,P,v_P)$, where $v_P$ is a tangent direction at $P$ and let $\varphi\in \Gamma_{\bar\Sigma,v_P}$. Let $\mathcal{M}$ be the moduli space of flat $\text{SU}(n)$ connections on $\Sigma,$ whose holonomy around the puncture is a fixed generator $\delta \in Z(\text{SU}(n))$ of the center of $\text{SU}(n).$ This is a compact, simply connected, symplectic manifold equipped with the Narasimhan-Atiyah-Bott-Goldman form $\omega$ \cite{AtiyahBott,Goldman84}, and $\M$ supports the Chern-Simons line bundle $\mathcal{L}_{\text{CS}},$ which is a hermitian line bundle with a unitary connection with curvature $-i\omega$ \cite{Freed}.  A choice of complex structure $\sigma$ on $\Sigma$ induces a Kähler manifold structure on $\mathcal{M}$ through the work of Narasimhan and Seshadri \cite{NS}. The Verlinde bundle is the vector bundle over Teichmüller space $\mathcal{V}^{(k)}\rightarrow \mathcal{T}_{\Sigma},$ whose fiber at a point $\sigma$ is $H^0\left(\M_{\sigma},\mathcal{L}_{\text{CS}}^{\otimes k}\right),$ and the module of states $Z^{(k)}_{\text{CS}}(\Sigma)$ associated to $\Sigma$ is identified with any fibre of this bundle. The Hitchin connection constructed in  \cite{APW,Hitchin} provides the identification (projectively) of the different fibers. The mapping class $\varphi$  act on $\mathcal{M}$, and this lifts to an action $\varphi^{(k)}$ on the Verlinde bundle. Composing $\varphi^{(k)}$ with the parallel transport of the Hitchin connection we get the so-called quantum action on $Z^{(k)}_{\text{CS}}(\Sigma),$ denoted by $Z_{\text{CS}}^{(k)}(\varphi).$  Consider the mapping torus $T_{\varphi}=\Sigma\times I/[(x,0)\sim (\varphi(x),1)].$ From the axioms of a TQFT we have
\begin{equation} \label{MFandMappingTorus}
Z_{\text{WRT}}^{(k)}(T_{\varphi})=\tr\left(Z_{\text{WRT}}^{(k)}({\varphi}): Z_{\text{WRT}}^{(k)}(\Sigma) \rightarrow Z_{\text{WRT}}^{(k)}(\Sigma)\right).
\end{equation}
Via the isomorphisms described above, this formula motivates the study of the asymptotic expansion of $\tr\left(Z_{\text{CS}}^{(k)}(\varphi)\right).$ Let $\M^{\varphi}=\{z \in \M: \varphi(z)=z\}$ and let $\M_{T_{\varphi}}$ be the moduli space of flat connections in a principal $\text{SU}(n)$-bundle on $ T_{\varphi} \setminus L$ with central meridional holonomy $\delta$ along the link $L$ in the mapping torus traced out by the puncture of $\Sigma$.  We have a natural surjection $\rho: \M_{T_{\varphi}} \rightarrow \M^{\varphi}.$ Let $S_{\text{CS}}$ be the the Chern-Simons action \cite{AJHM,Freed}. Let $2n_0=\text{dim}(\M).$   Setting $r=k+n$, we prove the following theorem with no assumptions on the mapping class.
 
 \begin{theorem} \label{1.1} For any mapping class $\varphi$, there exists a smooth $\C/ 2\pi i \Z$-valued function $P_{\varphi}$ on ${\mathcal M}$ together with a sequence of smooth top forms $\{\Omega_n\}$ such that for every $N\in \N$ we have \begin{equation}\begin{split}
\tr \left(Z^{(k)}_{\text{CS}}(\varphi)\right) =r^{n_0} \sum_{n=0}^{N}r^{-n}\int_{\mathcal M} e^{rP_{\varphi}} \ \Omega_n   +O\left(k^{n_0-(N+1)}\right).\end{split}
\end{equation}
$P_{\varphi}$ is real analytic near $\M^{\varphi},$  and the real part of $P_\varphi$ is strictly negative away from $\M^{\varphi}.$ Every $z \in \M^{\varphi}$ is a stationary point of $P_{\varphi}$ and $$P_{\varphi} \circ \rho=2\pi iS_{\text{CS}}.$$
\end{theorem}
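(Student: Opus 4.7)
My plan is to express $\tr(Z^{(k)}_{\text{CS}}(\varphi))$ as an integral over $\mathcal{M}$ and then to read off $P_\varphi$ and $\Omega_n$ from the off-diagonal asymptotic expansion of the Bergman kernel of the Verlinde space. Since the trace is independent of the chosen Kähler polarization, I fix one complex structure $\sigma$ on $\Sigma$. The operator $Z^{(k)}_{\text{CS}}(\varphi)$ factors as Hitchin parallel transport from $\sigma$ to $\varphi\cdot\sigma$ followed by the geometric action of a lift $\tilde{\varphi}^{(k)}$ of $\varphi$ from $\mathcal{M}_{\varphi\cdot\sigma}$ back to $\mathcal{M}_{\sigma}$. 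A standard computation using the Schwartz kernel of this operator expresses the trace as
\begin{equation}
\tr\bigl(Z^{(k)}_{\text{CS}}(\varphi)\bigr)=\int_{\mathcal{M}}\bigl\langle \tilde{\varphi}^{(k)}\cdot B^{(k)}(\varphi(z),z)\bigr\rangle_z\, d\text{vol}(z),
\end{equation}
where $B^{(k)}$ is the Bergman kernel and the pointwise pairing uses $\tilde{\varphi}^{(k)}$ to identify the fibers of $\mathcal{L}_{\text{CS}}^{\otimes k}$ at $z$ and $\varphi(z)$.

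The second step is the Boutet de Monvel--Sjöstrand type off-diagonal expansion of the Bergman kernel. Near the diagonal there exist an almost-analytic phase $\psi(z,\bar{w})$, holomorphic in $z$ and antiholomorphic in $w$, satisfying $\psi(z,\bar{z})=0$ and $\operatorname{Re}\psi(z,\bar{w})<0$ when $z\neq w$ are distinct and nearby, together with smooth symbols $a_n(z,\bar{w})$, such that
\begin{equation}
B^{(k)}(z,w)=r^{n_0}\,e^{r\psi(z,\bar{w})}\sum_{n\geq 0}r^{-n}a_n(z,\bar{w}) + O(r^{-\infty}).
\end{equation}
Substituting this expansion into the trace formula and absorbing into the exponent the scalar by which $\tilde{\varphi}^{(k)}$ acts after the fiber identification defines a smooth $\mathbb{C}/2\pi i\mathbb{Z}$-valued phase $P_\varphi$ near $\mathcal{M}^\varphi$, while the symbols $a_n$ together with the Kähler volume form give the top forms $\Omega_n$. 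Outside a tubular neighborhood of $\mathcal{M}^\varphi$ the contribution is $O(r^{-\infty})$ by compactness of $\mathcal{M}$ and strict negativity of $\operatorname{Re}\psi$; this allows $P_\varphi$ and $\Omega_n$ to be extended smoothly (but not analytically) across $\mathcal{M}$ without affecting the asymptotic expansion.

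The claimed analytic properties of $P_\varphi$ then follow directly from the construction. Real analyticity near $\mathcal{M}^\varphi$ is inherited from the real analyticity of $\psi$ near the diagonal and of $\varphi$ and its lift near the fixed set. The estimate $\operatorname{Re}P_\varphi<0$ away from $\mathcal{M}^\varphi$ is precisely the off-diagonal Bergman decay. Finally, differentiating the identity $\psi(z,\bar{z})\equiv 0$ at a fixed point $z\in \mathcal{M}^\varphi$ yields vanishing of both the $\partial$- and $\bar\partial$-derivatives of $z\mapsto \psi(\varphi(z),\bar{z})$ there, and a parallel check for the correction arising from the lift shows that $z$ is a stationary point of $P_\varphi$.

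The main obstacle is the identification $P_\varphi\circ \rho=2\pi i\,S_{\text{CS}}$, which is the geometric heart of the theorem. A point $\hat{z}\in \mathcal{M}_{T_\varphi}$ is an equivalence class of flat $\text{SU}(n)$-connections on $T_\varphi\setminus L$ with the prescribed central meridional holonomy; restricting to $\Sigma\hookrightarrow T_\varphi$ produces a flat connection representing $z=\rho(\hat{z})\in\mathcal{M}^\varphi$, while the remaining data in $\hat{z}$, namely the holonomy along the $S^1$-direction of $T_\varphi$, supplies precisely the identification $(\mathcal{L}_{\text{CS}})_z\simeq (\varphi^*\mathcal{L}_{\text{CS}})_z$ used to compute the scalar factor of $\tilde{\varphi}^{(k)}$ at $z$. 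Tracing through Freed's construction of the Chern-Simons line bundle and its connection, this holonomy equals $\exp(2\pi i S_{\text{CS}}(\hat{z}))$. Combined with $\psi(z,\bar{z})=0$ on the fixed locus, this yields $P_\varphi(\rho(\hat{z}))=2\pi i S_{\text{CS}}(\hat{z})$ and completes the proof.
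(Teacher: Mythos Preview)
There is a real gap: your trace formula drops the Hitchin parallel transport. By construction $Z^{(k)}_{\text{CS}}(\varphi)=P_\gamma^{(k)}\circ\varphi^{(k)}$, where $\varphi^{(k)}:H^{(k)}_\sigma\to H^{(k)}_{\varphi\cdot\sigma}$ is the geometric lift and $P_\gamma^{(k)}:H^{(k)}_{\varphi\cdot\sigma}\to H^{(k)}_\sigma$ is parallel transport for the Hitchin connection along a path in Teichm\"uller space. Your integral $\int_{\mathcal M}\langle\tilde\varphi^{(k)}\cdot B^{(k)}(\varphi(z),z)\rangle\,d\mathrm{vol}$ involves only the Bergman kernel of the fixed polarization $\sigma$ and the geometric lift; the operator $P_\gamma^{(k)}$ has simply vanished, and there is no ``standard computation'' that makes it disappear. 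The paper's route is to invoke the estimate
\[
\Bigl\lVert\, P_\gamma^{(k)}-\sum_{u=0}^m T^{(k)}_{f_u,(\sigma_0,\sigma_1)}\,\tilde{k}^{-u}\Bigr\rVert=O\bigl(k^{-(m+1)}\bigr),
\]
which expresses the parallel transport asymptotically as a series of Toeplitz operators between the two polarizations. This result, taken from the first author's proof of asymptotic faithfulness, is what actually lets one write the trace against the Bergman kernel, and the Toeplitz symbols $f_u$ feed directly into the forms $\Omega_n$. Without this step you have computed the trace of the wrong operator.

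A smaller point: your stationarity argument is incomplete. Even granting that $z\mapsto\psi(\varphi(z),\bar z)$ is stationary at a fixed point, the ``correction arising from the lift'' is not dispatched by a parallel one-line check. In the paper's local frame the total phase splits as $P=i\theta+\chi\circ R$, and at a fixed point the derivatives of the two summands are individually nonzero and cancel; establishing that cancellation uses that $\tilde\varphi$ is parallel for the connection induced on $\varphi^*\mathcal{L}_{\text{CS}}\otimes\mathcal{L}_{\text{CS}}^*$ and takes a page of computation. In your normalization the entire burden lands on the lift term, and you have given no argument that its differential vanishes at fixed points.
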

We emphasize that the phase $P_\varphi$ is a globally defined function on $\mathcal M$. Under assumptions on the action of $\varphi$ on the moduli space, we can use this theorem to obtain more information on the asymptotic expansion of the quantum invariant. Let $\{\theta_j\}$ be the critical Chern-Simons values, and for each $j,$ let $2m_j$ be the maximal value of $\text{dim}(\text{Ker}(\dv \varphi_z-I))$ among $z \in \M^{\varphi}$ with $P_{\varphi}(z)=2\pi i\theta_j.$ Recall that $\M^{\varphi}$ is said to be cut out transversely if any connected component $Y\subset \M^{\varphi}$ is smooth and satisfies $T Y=\text{Ker}(\dv \varphi-I)_{\mid Y}.$ Observe that the real analyticity of $P_{\varphi}$ guarantee that for any $z \in \M^{\varphi}$ there exists a coordinate neighborhood $U$ of $z,$ such that $P_{\varphi}$ is the restriction of a holomorphic function $\check{P}_{\varphi} $ defined on $U+\sqrt{-1} U.$ Thus we can invoke the theory of Laplace integrals with holomorphic phase and Picard-Lefschetz theory, pioneered by Malgrange \cite{Malgrange} and Pham  \cite{Pham83}. For this; no assumption on $\text{Hess}(P_{\varphi})$ is needed. We prove the following theorem.

\begin{theorem} \label{MainTheorem2} If $\M^{\varphi}$ is cut out transversely there exists a sequence of differential forms $\Omega_{\alpha}^j$ on $\M^{\varphi}$ giving an asymptotic expansion \begin{equation}
 \tr \left(Z_{\text{CS}}^{(k)}(\varphi)\right)
\sim  \sum_{j} e^{ 2\pi i r \theta_j} r^{m_j} \sum_{\alpha=0}^{\infty} r^{-\frac{\alpha}{2}} \int_{\mathcal{M}^{\varphi}} \Omega^j_{\alpha}.
\end{equation} 
If all $z \in \M^{\varphi}$ satisfy one of the following three conditions \begin{itemize}  
\item \label{2} $z$ is a smooth point with $T_z \M^{\varphi}=\text{Ker}(\dv\varphi_z-I),$
\item  \label{3} $\text{dim}(\text{Ker}(\dv \varphi_z -I)) \leq 1,$ or
\item  $z$ is an isolated stationary point of the germ of the holomorphic extension $\check{P}_{\varphi}$ at $z,$ 
\end{itemize}  we then have an asymptotic expansion of the form \[
\tr \left(Z_{\text{CS}}^{(k)}(\varphi)\right) \sim  \sum_j e^{2\pi i r\theta_j}r^{n_j}\sum_{\alpha \in A_j, \beta \in B_j} c_{\alpha,\beta} r^{\alpha} \log(r)^{\beta}. 
 \]
  Here $n_j \in \mathbb{Q}_{\geq 0},$  each $A_j\subset \mathbb{Q}_{\leq 0}$ is a union of finitely many arithmetic progressions, and each $B_j \subset \mathbb{N}$ is finite. If for all $z \in \mathcal{M}^{\varphi}$ the first or second condition holds, then $B_j=\{0\}$ and $n_j=m_j$ for all $j.$
\end{theorem}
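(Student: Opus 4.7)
The plan is to take the finite-order expansion of Theorem \ref{1.1} as input and perform Laplace asymptotics on each integral $\int_{\mathcal{M}} e^{rP_\varphi}\,\Omega_n$. Because $\mathrm{Re}(P_\varphi)$ is strictly negative away from $\mathcal{M}^\varphi$ and $P_\varphi$ is real analytic near $\mathcal{M}^\varphi$, a partition of unity combined with an exponential-decay estimate confines each integral, up to errors of order $O(r^{-\infty})$, to an arbitrarily small neighbourhood of $\mathcal{M}^\varphi$. Grouping the connected components of $\mathcal{M}^\varphi$ by their shared critical value $\theta_j$ of $\tfrac{1}{2\pi i}P_\varphi$ then pulls out the oscillatory factor $e^{2\pi i r\theta_j}$ in front of each term.

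To establish the first claim, assume global transversality. On a tubular neighbourhood of each component $Y\subset\mathcal{M}^\varphi$ with $\dim Y=2m_j$, the hypothesis $TY=\mathrm{Ker}(\dv\varphi-I)_{\mid Y}$ translates, via the way $P_\varphi$ arises from a generating function for $\varphi$ (this is the content of $P_\varphi\circ\rho=2\pi iS_{\mathrm{CS}}$ and the way fixed points lift to $\mathcal{M}_{T_\varphi}$), into non-degeneracy of the normal Hessian of $P_\varphi$. A Morse lemma with $Y$-parameters then straightens $P_\varphi$ in the normal coordinates, and Gaussian integration in the $2(n_0-m_j)$ normal directions produces the prefactor $r^{-(n_0-m_j)}$. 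Combined with $r^{n_0}$ from Theorem \ref{1.1}, this yields the leading order $r^{m_j}$, while Taylor-expanding the smooth amplitude $\Omega_n$ transversely yields the $r^{-\alpha/2}$ series.

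For the finer expansion under the three bullet conditions, the essential ingredient is the asymptotic theory of Laplace integrals whose phase admits a holomorphic extension $\check{P}_\varphi$, in the style of Malgrange and Pham. In case (i) one is already in the transverse situation, so no logarithm appears and the powers are integers. In case (ii), the bound $\dim\mathrm{Ker}(\dv\varphi_z-I)\leq 1$ forces the transverse phase germ to be either non-degenerate or a one-variable holomorphic germ with an isolated critical point; both sub-cases, handled by a one-variable Mellin transform, give simple poles at (half-)integers and no logarithms, again with $n_j=m_j$. In case (iii), $z$ is an isolated holomorphic critical point of $\check{P}_\varphi$, and Picard--Lefschetz theory identifies the asymptotic with a sum of Mellin-transform residues whose poles lie in a finite union of arithmetic progressions of rationals $\leq 0$ and whose orders give rise to the finite set $B_j\subset\mathbb{N}$ of logarithmic powers.

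The main technical obstacle will be patching these local asymptotics together on a fixed locus $\mathcal{M}^\varphi$ whose different strata may fall into different bullets simultaneously, especially when case (iii) occurs alongside a positive-dimensional fixed component. The strategy is to cover $\mathcal{M}^\varphi$ by neighbourhoods on each of which exactly one of the three conditions applies, apply the corresponding local expansion with a real analytic partition of unity, and verify that the union of the admissible exponent sets remains a finite union of arithmetic progressions while the supremum of logarithmic orders remains finite. A subsidiary point is that Theorem \ref{1.1} delivers an entire series $\sum r^{-n}\int e^{rP_\varphi}\Omega_n$ rather than a single integral; since the localisation estimate is uniform in $n$ and each $\Omega_n$ generates the same family of admissible exponents up to an integer shift, the combined series stays within the stated form.
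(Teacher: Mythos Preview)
Your overall architecture matches the paper's: localize the integrals of Theorem~\ref{1.1} near $\mathcal{M}^\varphi$, identify $\mathrm{Ker}(\dv\varphi_z-I)$ with the kernel of $\mathrm{Hess}(P_\varphi)_z$, then apply H\"ormander's stationary phase with parameters in the transverse case and an oscillatory-integral result in the degenerate cases. The paper carries this out via an explicit computation of $\mathrm{Hess}(P_\varphi)_z$ in K\"ahler coordinates (Lemma~\ref{theorem 1} and Theorem~\ref{NondegofHessPatrealcomplementstoFixedpointset}) rather than your generating-function heuristic, but the content is the same.

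There is, however, a genuine gap in your treatment of case~(iii). You invoke Picard--Lefschetz theory and the holomorphic Laplace expansion~\eqref{HolomorphicLaplace} directly, but Malgrange's theorem for holomorphic phase requires the \emph{amplitude} $n$-form to be holomorphic as well, and the forms $\Omega_n$ coming from Theorem~\ref{1.1} are merely $C^\infty$. The paper bridges this by Taylor-expanding the amplitude at the fixed point: the Taylor polynomial is automatically the restriction of a holomorphic form, so~\eqref{HolomorphicLaplace} applies to it, while the remainder is handled by observing that the coefficient distributions $B^{2a}_{\alpha,\beta}$ in Malgrange's real Laplace expansion~\eqref{Laplace} are of finite order and supported on $\{a=a(p)\}$, which by the isolated-zero hypothesis is the single point~$p$. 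Hence a remainder vanishing to sufficiently high order there contributes $O(k^{-M})$ for any prescribed~$M$. Without this argument (or an equivalent one), case~(iii) is not established.

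A smaller point: in case~(ii) you claim the exponents are (half-)integers. After H\"ormander's reduction one is left with a one-dimensional phase whose germ is $y^m$ for some $m\ge 2$ determined by the vanishing order of $f^0$, and the resulting exponents lie in $\tfrac{1}{m}\mathbb{Z}_{\le 0}$, not necessarily $\tfrac{1}{2}\mathbb{Z}$ (cf.\ Remark~\ref{MMM}). This does not affect the statement of Theorem~\ref{MainTheorem2}, which only asserts a finite union of rational arithmetic progressions, but your description should be corrected.
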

All asymptotic expansions in this paper are in the Poincaré sense \cite{Olver} and are denoted by $\sim$ , i.e. if the conditions of Theorem \ref{MainTheorem2} holds, we have for any  $N\in \mathbb{N}$
$$\tr \left(Z_{\text{CS}}^{(k)}(\varphi)\right)= \sum_j e^{2\pi i r\theta_j}r^{n_j}\sum_{\alpha \in A_j(N), \beta \in B_j} c_{\alpha,\beta} r^{\alpha} \log(r)^{\beta}+O\left(k^{-N}\right), $$ where $A_j(N)=A_j\cap [-N-n_j,0].$ The numbers $n_j$ and the coefficients $c_{\alpha,\beta}$ as well as the exponents $\alpha,\beta$ are uniquely determined. If $\M^{\varphi}$ is transversely cut out, then $\M_{T_{\varphi}}$ is smooth and $2m_j$ is the maximal dimension of a component $Y\subset \M_{T_\varphi}$ on which ${S_{\text{CS}}}_{\mid Y}=\theta_j.$  %  and a fortiori the integrals $\int_{\mathcal{M}^{\varphi}} \Omega^j_a$ are uniquely determinined. 

As an example we consider the moduli space $\mathcal{M}_l$ of flat $\text{SU}(2)$ connections on a punctured torus $\Sigma_1^1,$ whose holonomy around the puncture has trace $l\in (-2,2).$  For any $\varphi$ in the mapping class group $\Gamma_{(\bar\Sigma_1^1, v_P)}$ one can construct $Z_l^{(k)}({\varphi})$ in a manner analogue to the quantum actions described above. The analog of Theorem \ref{1.1} also holds in the punctured torus case, and we denote the associated phase functions by $P_{\varphi,l}.$ The relevant projectively flat Hitchin connection, which matches with the TUY-connection under the Pauly isomorphism \cite{Pauly1}, has not been identified among all the possible Hitchin connections, which exists in this parabolic case. This is however not a concern for us, since we prove for each of these Hitchin connections the following theorem, where $\{\theta_j\}$ denotes the Chern-Simons values discussed in \cite{AJHM}, $m_j$ is as above.
\begin{theorem} \label{NiceResult}
If all $z \in \M_l^{\varphi}$ satisfy one of the following three conditions \begin{itemize}  
\item \label{4} $z$ is a smooth point with $T_z \M_l^{\varphi}=\text{Ker}(\dv\varphi_z-I),$
\item  \label{5} $\text{dim}(\text{Ker}(\dv \varphi_z -I)) \leq 1,$ or
\item  $z$ is an isolated stationary point of the germ of the holomorphic extension $\check{P}_{\varphi,l}$ at $z,$
\end{itemize}  we then have an asymptotic expansion of the form \[
\tr \left(Z_{l}^{(k)}(\varphi)\right) \sim  \sum_j e^{2\pi i r\theta_j}r^{n_j}\sum_{\alpha \in A_j, \beta \in B_j} c_{\alpha,\beta} r^{\alpha} \log(r)^{\beta}. 
 \]
 Here $n_j \in \mathbb{Q}_{\geq 0},$ each $A_j\subset \mathbb{Q}_{\leq 0}$ is a union of finitely many arithmetic progressions, and each $B_j \subset \mathbb{N}$ is finite. If for all $z \in \mathcal{M}_l^{\varphi}$ the first or second condition holds, then $B_j=\{0\}$ and $n_j=m_j$ for all $j.$ If $\varphi \in \Gamma_{(\bar\Sigma_1^1, v_P)}$ is a pseudo-Anosov homeomorphism then for almost all $l\in (-2,2)$ the first condition holds for all $z \in \M_{l}^{\varphi}$ and  we have an expansion of the form \begin{equation}
  \tr \left(Z_{l}^{(k)}(\varphi)\right)
\sim  \sum_{j} e^{ 2\pi i r \theta_j} \sum_{\alpha=0}^{\infty} c_{j,\alpha} r^{-\frac{\alpha}{2}}.
\end{equation} 
\end{theorem}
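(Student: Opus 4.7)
The proof proceeds in two stages. First, one establishes an analog of Theorem~\ref{1.1} in the parabolic setting of the once-punctured torus, producing a globally defined $\C/2\pi i\Z$-valued phase $P_{\varphi,l}$ on $\M_l$ with the same four properties (real analyticity near $\M_l^\varphi$, strictly negative real part elsewhere, vanishing differential on $\M_l^\varphi$, and $P_{\varphi,l}\circ\rho=2\pi iS_{\text{CS}}$), yielding for some smooth top forms $\Omega_n$ and $2n_0=\dim\M_l=2$ the representation
\begin{equation}
\tr\bigl(Z_l^{(k)}(\varphi)\bigr)=r^{n_0}\sum_{n=0}^{N}r^{-n}\int_{\M_l}e^{rP_{\varphi,l}}\,\Omega_n+O\bigl(k^{n_0-N-1}\bigr).
\end{equation}
This construction is carried out for each of the Hitchin connections available on the parabolic Verlinde bundle, paralleling the proof of Theorem~\ref{1.1} but with the gauge-theoretic input adapted to representations of $\pi_1(\Sigma_1^1)$ with peripheral holonomy of trace $l$.

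\textbf{First half of the theorem.} Given this integral representation, the argument is a direct transcription of the proof of Theorem~\ref{MainTheorem2}. Strict negativity of $\mathrm{Re}(P_{\varphi,l})$ off $\M_l^\varphi$ reduces the problem to neighbourhoods of fixed points; near each such point one passes to the holomorphic extension $\check P_{\varphi,l}$ and invokes one of three local expansions. Under condition one, stationary phase transverse to $\M_l^\varphi$ yields a series with $B_j=\{0\}$ and leading order $r^{m_j}$; under condition two the problem reduces to a one-dimensional Laplace integral with the same conclusion; under condition three, Malgrange--Pham Picard--Lefschetz theory for Laplace integrals with holomorphic phase produces the general expansion involving arithmetic progressions of rational powers of $r$ and finitely many logarithmic factors. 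Grouping local contributions by Chern--Simons value yields the stated global asymptotic.

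\textbf{Pseudo-Anosov case.} Here one must verify that for almost every $l\in(-2,2)$, every fixed point of $\varphi$ on $\M_l$ is isolated and transverse. I would use the Fricke realisation: $\M_l$ is the real locus of the cubic surface $x^2+y^2+z^2-xyz=2+l$ inside $[-2,2]^3$, with coordinates $x=\tr\rho(a)$, $y=\tr\rho(b)$, $z=\tr\rho(ab)$, on which $\Gamma_{(\bar\Sigma_1^1,v_P)}\cong\mathrm{SL}(2,\Z)$ acts by explicit polynomial automorphisms. The combined polynomial system for $\mathrm{Fix}(\varphi)$ together with the level-set equation forms a real-analytic family in $l$, and the set of $l$ at which some fixed point is non-transverse is the zero locus of a real-analytic discriminant $\Delta(l)$. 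Because $\varphi$ is pseudo-Anosov, the induced polynomial map on the cubic has Perron--Frobenius hyperbolic dynamics at its fixed points --- the derivative has an eigenvalue of modulus strictly greater than one --- so $\dv\varphi_z-I$ is invertible at at least one fixed point for at least one value of $l$. Hence $\Delta\not\equiv 0$, and its zero set is a proper real-analytic subset of $(-2,2)$ of measure zero. For $l$ outside this set every fixed point satisfies condition one with $m_j=0$, so the first half of the theorem gives the clean expansion in half-integer powers of $r$.

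\textbf{Main obstacle.} The most delicate step is the transversality claim: a priori $\Delta$ could vanish identically along the $l$-family, and ruling this out requires more than the absence of invariant essential curves inherent to pseudo-Anosov. Exploiting the Perron--Frobenius eigenvalue structure of the pseudo-Anosov action on the Fricke cubic --- implemented via the explicit $\mathrm{SL}(2,\Z)$ formulae on trace coordinates --- is what converts the generic behaviour of $\varphi$ into the almost-everywhere transversality needed to apply the first half.
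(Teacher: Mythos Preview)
Your treatment of the first half of the theorem is essentially the same as the paper's: once the parabolic analog of Theorem~\ref{1.1} is in place, the paper simply invokes the general Theorems~\ref{Theorem2} and~\ref{Main Theorem2} (equivalently, the argument behind Theorem~\ref{MainTheorem2}) applied to $\M_l$, exactly as you describe.

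For the pseudo-Anosov clause, however, you take a different route from the paper, and your argument has a genuine gap. The paper does not attempt an independent transversality proof at all: it cites Brown's result (Proposition~5.1 in \cite{Brown08}) that for a pseudo-Anosov $\varphi$ the fixed locus $\M^{\varphi}$ inside the full three-dimensional $\text{SU}(2)$ character variety is a one-dimensional real algebraic set, and that for generic $l$ the slice $\M_l^{\varphi}$ is finite and transversely cut out. That is the entire input.

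Your proposed substitute --- that pseudo-Anosov forces ``Perron--Frobenius hyperbolic dynamics'' at fixed points on the Fricke cubic, i.e.\ an eigenvalue of $\dv\varphi_z$ of modulus strictly greater than one --- is false as stated. The paper's own Appendix example $\varphi=t_b^{-1}\circ t_a$ at $l=-1/4$ has $\dv\varphi_p$ conjugate to $\left(\begin{smallmatrix}-1 & -3/2\\ 0 & -1\end{smallmatrix}\right)$ on $T_p\M_{-1/4}$, so both eigenvalues lie on the unit circle; the fixed point is parabolic, not hyperbolic. Transversality happens to hold there because the eigenvalue is $-1$ rather than $1$, not because of any expanding direction. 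Thus the mechanism you invoke does not apply, and your discriminant argument collapses at the step ``hence $\Delta\not\equiv 0$''. Even granting some hyperbolic fixed point, the inference is backwards: to show $\Delta(l_0)\neq 0$ you would need \emph{all} fixed points on $\M_{l_0}$ to be transverse, not just one. The paper sidesteps all of this by importing Brown's structural result on the one-dimensionality of the global fixed locus, from which generic transversality of the level-set slices follows.
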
 
We provide an explicit example of an element $\varphi \in \Gamma_1^1$ such that $\M_{-1/4}^{\varphi}$ is not transversely cut out, but which satisfy the condition of Theorem \ref{NiceResult}. The proofs of Theorem \ref{MainTheorem2},  and Theorem \ref{NiceResult} proceeds by an application of stationary phase approximation to the integrals of Theorem \ref{1.1}. In doing so, in the case where $\M^{\varphi}$ is not cut out transversely, we prove a result on stationary phase approximation by combining work of Hörmander and Malgrange. See Remark \ref{Comparison} for a comparison.

\begin{theorem}\label{AnalyticalResult2}
Let $f \in C^{\infty}(\mathbb{R}^n,\mathbb{C}).$ Assume $f$ has a stationary point $p,$ which is a maximum of the real part of $f$. Assume $\text{Re}(f)$ and $\text{Im}(f)$ are both real analytic near $p.$ Let $\phi$ be a smooth function with compact supported contained in a small neighborhood $D$ of $p.$ If $\text{Hess}(f)_p$ is non-degenerate on a subspace of $T_p \R^n$  of co-dimension one and $D$ is sufficiently small, then there exists\footnote{See Remark \ref{MMM} for more details on $m.$} $m\in \mathbb{N}$ such that we have an asymptotic expansion 
\begin{equation} \label{ourresult}
\int_{\R^n} e^{kf(x)} \phi(x) \ \dv x \sim e^{kf(p)} k^{-\frac{n-1}{2}} \sum_{\alpha=0}^{\infty} c_\alpha(\phi) k^{-\alpha/m}.
\end{equation}
Write $a=\text{Re}(f)$. If the holomorphic extension $\check{f}$ has an isolated stationary point at $p$ and $a-a(p)$ has an isolated zero at $p,$ and if $D$ is sufficiently small, then there exists a union $A$ of finitely many arithmetic progressions of negative rational numbers and a finite set $B\subset \mathbb{N}$ such that we have \begin{equation} \label{ourresult2}
 \int_{\R^n} e^{kf(x)} \phi(x) \ \dv x \sim  e^{kf(p)} \sum_{\alpha \in A, \beta \in B} c_{\alpha,\beta}(\phi) k^\alpha \log(k)^\beta.
 \end{equation}\end{theorem}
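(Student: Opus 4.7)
The plan is to prove both assertions by reducing to a one-dimensional Laplace integral with real analytic phase, at which point the one-dimensional results of Malgrange (and Pham, for the second part) apply directly. Throughout, I normalize so that $p=0$ and $f(0)=0$, and I let $\check{f}$ denote the holomorphic extension of $f$ to a complex neighborhood of $0\in \C^n$, which exists because $\text{Re}(f)$ and $\text{Im}(f)$ are real analytic near $p$.

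For the first assertion, non-degeneracy of $\text{Hess}(f)_0$ on some codimension-one subspace is equivalent to $\text{Hess}(f)_0$ having rank at least $n-1$. After a real linear change of coordinates I may write $x=(y,t)$ with $y\in \R^{n-1}$, $t\in \R$, so that the block $\partial_y^2 \check{f}(0,0)$ is a non-degenerate complex symmetric form. By the holomorphic implicit function theorem there is a unique holomorphic map $y_*\colon (\C,0)\to (\C^{n-1},0)$ with $\partial_y \check{f}(y_*(\tau),\tau)=0$ for $\tau$ near $0$, and the reduced phase $g(\tau):=\check{f}(y_*(\tau),\tau)$ is holomorphic with a critical point at $0$. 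I then apply parametric complex stationary phase in the $y$-direction with $t$ as parameter: using that $\text{Re}(f)$ attains a strict maximum at $0$ and the Cauchy-Riemann equations for $\check{f}$, I deform the $y$-contour $\R^{n-1}$ into $\C^{n-1}$ so that for each real $t$ near $0$ it passes through $y_*(t)$ along a steepest-descent surface for $\text{Re}(\check{f}(\,\cdot\,,t))$ and coincides with $\R^{n-1}$ outside a small ball. Complex stationary phase along the deformed contour yields
\[
\int_{\R^{n-1}} e^{kf(y,t)}\phi(y,t)\,\dv y \;\sim\; e^{kg(t)}\,k^{-(n-1)/2}\sum_{j=0}^{\infty} k^{-j}\,a_j(t),
\]
uniformly for $t$ near $0$, with $a_j$ smooth. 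Inserting this into the $t$-integral reduces matters to one-dimensional integrals $\int_{\R} e^{kg(t)} a_j(t)\chi(t)\,\dv t$ with real analytic phase $g$ whose real part is maximized at $0$. Malgrange's theorem for Laplace integrals with real analytic one-dimensional phase then produces an expansion $\sum_\alpha c_\alpha^{(j)}k^{-\alpha/m}$, where $m$ is controlled by the Newton-Puiseux data of $g$ at $0$ (cf.\ Remark~\ref{MMM}); combining with the prefactors $k^{-(n-1)/2-j}$ gives (\ref{ourresult}).

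For the second assertion, the stronger hypotheses — $\check{f}$ has an isolated critical point at $0$ and $a-a(0)$ has an isolated zero at $0$ — place us in the Malgrange-Pham framework of Picard-Lefschetz theory for Laplace integrals. In a small polydisc the real contour $\R^n$ is homologous, rel.\ the ``contour at infinity'' in a sublevel set of $\text{Re}(\check{f})$, to an integer combination of Lefschetz thimbles issuing from the single complex critical point $0$ of $\check{f}$. Integration along each thimble yields a function of $k$ of Nilsson class, whose asymptotic expansion is by Malgrange's theorem of the form $\sum_{\alpha,\beta} c_{\alpha,\beta}\,k^\alpha \log(k)^\beta$ with $\alpha$ in a finite union of arithmetic progressions of negative rationals (governed by the roots of the Bernstein-Sato polynomial of the germ of $\check{f}$ at $0$, equivalently by the monodromy eigenvalues of the Milnor fibration) and $\beta$ in a finite subset of $\N$. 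Multiplication by $e^{kf(0)}$ produces (\ref{ourresult2}).

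The main obstacle is to execute the one-dimensional reduction cleanly in the complex-valued setting: the real system $\partial_y \text{Re}(f)=0,\ \partial_y \text{Im}(f)=0$ is overdetermined on $\R^{n-1}$, so the reduction is forced to take place in the complex domain and the real $y$-contour must be explicitly deformed to pass through the complex critical point $y_*(t)$. Verifying admissibility of this deformation, together with uniform control of the parametric expansion as $t$ varies, requires a careful combination of Hörmander's complex stationary phase with a steepest-descent argument. In the second part, admissibility of the Lefschetz thimble deformation is where the hypothesis that $a-a(0)$ has an isolated zero at $0$ is essential: it ensures that the real contour can be pushed away from $0$ without changing the integral and then decomposed along thimbles. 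Once these geometric reductions are in place, the analytic input from Malgrange and Pham is essentially off-the-shelf.
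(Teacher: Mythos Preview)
Your overall strategy for the first assertion --- reduce to a one-dimensional integral via stationary phase in the transverse variables --- matches the paper's. But both parts of your argument share a genuine gap: you deform the real contour into $\C^{n-1}$ (and, for the second assertion, into Lefschetz thimbles in $\C^n$) while the amplitude $\phi$ is only $C^\infty$, not real analytic. Cauchy's theorem is unavailable for such $\phi$, so the contour deformation is not justified as written. One could try to repair this with an almost-analytic extension of $\phi$, but then the $\bar\partial$ error terms must be controlled uniformly in $k$ and in the parameter $t$, and you have not indicated how.

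The paper is organized precisely to avoid this difficulty. For the first assertion it applies H\"ormander's stationary phase with parameters directly on $\R^{n-1}\times\R$ --- this theorem is stated for smooth amplitudes and complex-valued phases with $\text{Re}(f)\le 0$, so no contour deformation is needed. The real work is then to show that the reduced phase $f^0(t)$ (determined only modulo the ideal generated by $\partial f/\partial y_j$) can be chosen with real analytic real and imaginary parts; this is done by a Weierstrass preparation argument in the holomorphic extension, rather than by the implicit function theorem alone. Once $g=f^0$ is real analytic, the one-dimensional case is handled (either $g$ is constant, or $\check g$ has an isolated critical point and $\text{Re}(g)$ an isolated zero, reducing to the second assertion in dimension one, which in turn reduces to $g(y)=y^m$).

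For the second assertion the paper again avoids deforming a smooth amplitude. It Taylor-expands $\phi=T(\phi)+R(\phi)$ at $p$ to high order. The polynomial part $T(\phi)\,\dv x$ is a holomorphic $n$-form, so Malgrange's holomorphic-phase result applies to $\int e^{k\check f}\,T(\phi)\,\dv x$ over the real chain. The remainder $\int e^{kf}R(\phi)\,\dv x$ is bounded via Cauchy--Schwarz by $\big(\int e^{2ka}(R(\phi))^2\big)^{1/2}$, and Malgrange's \emph{real} Laplace theorem is applied to the phase $2a$; here the hypothesis that $a-a(p)$ has an isolated zero is used to conclude that the coefficient distributions are supported at $p$, so the high vanishing order of $R(\phi)$ forces this term to be $O(k^{-M})$. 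This is exactly the point where your use of that hypothesis (``pushing the real contour away from $0$'') breaks down for smooth $\phi$, and the Taylor-splitting device is the paper's substitute.
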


%These are asymptotic expansions in the Poincaré sense, i.e. for any $N \in \mathbb{N}$ we have $$e^{-kf(p)}\int_{\R^n} e^{kf(x)} \phi(x) \ \dv x =   \sum_{a \in A(N), b \in B} c_{a,b}(\phi) k^a \log(k)^b+O\left(k^{-N}\right),$$where $A(N))=[-N,0]\cap A.$

 We have stated our results in terms of topological quantum field theory, but our asymptotic analysis is carried out within the general formalism of geometric Kähler quantization and Hitchin connections, which provides a framework for relating different quantizations of a symplectic manifold, stemming from different choices of Kähler structures. This is a topic of intrinsic interest. We work with a quadruple $(\Gamma,\T,L,M)$ consisting of a prequantum line bundle $L\rightarrow M$ equipped with a family of Kähler structures on $M$ parametrized by a manifold $\T,$ and an action of a group $\Gamma$ by symplectomorphisms on $M$ together with an equivariant action on $\T.$ We call this data a pre-quantum action. Choosing a Kähler structure on $M$ within the family $\T,$ we get a quantization by taking global holomorphic sections of $L^{\otimes k}.$ We impose conditions made by the first named author et al. in \cite{Andersen12,AndersenGammelgaardRoed,AndersenGammelgaard14,AR}, ensuring that the resulting different quantizations form a smooth vector bundle over $\T,$ which supports a projectively flat connection known as a Hitchin connection. The action of $\Gamma$ lifts to $L,$ and by composing with the monodromy of the Hitchin connection, we can define a projective representation $Z^{(k)}$ of $\Gamma,$ which we call a quantum representation. The results of this paper are achieved by analyzing the asymptotic expansion of the character of a quantum representation, and then applying our results to the quantum representations of the mapping class groups  which arise in topological quantum field theory as described above. The procedure described above is related to the problem of quantizing a symplectomorphism. This has been considered by several authors in other contexts, see for instance the works of Zeldith \cite{Zeldith97,Zeldith03}, Bolte \cite{Bolte98}, or the works of Galasso and Paoletti \cite{GalassoPaolette18}.

 The appearance of oscillatory integrals where the phase is the restriction of a holomorphic function in Theorem \ref{1.1} is in agreement with recent developments in TQFT and quantum field theory \cite{DunneUnsal15,Kontsevich12}, which makes use of Écalle´s theory of resurgence \cite{Ecalle81a,Ecalle81b}. The investigation of the role played by the resurgence in relation to the WRT-TQFT has been by pioneered in works of Witten \cite{Witten2010} and Garoufalidis \cite{Garoufalidis08}, and more recently in works by Gukov-Putrov-Marinö \cite{GukovMarinoPutrov} and Chun \cite{Chun17} which sheds light on the relation between WRT-invariants and number theory discovered by Lawrence-Zagier \cite{LawrenceZagier} and further explored by Hikami \cite{Hikami2005}.

\begin{remark} Our result in the case where $\M^{\varphi}$ is transversally cut out can be seen as a generalization of a result due to Charles \cite{Laurent10,Charles16}. In \cite{Charles16} he carries out an asymptotic analysis of $\tr(\tilde{\rho}^{(k)}(\varphi)),$ where $\tilde{\rho}^{(k)}$ is a lift of the (projective) quantum representation defined using the monodromy of the Hitchin connection, to what he calls an asymptotic representation of a central extension $\tilde{\Gamma}$ of the mapping class group of a punctured surface of genus at least two. The main result in \cite{Charles16} provides the leading order asymptotic under the condition that $\M_{\Sigma}^{\varphi}$ is a $0$-dimensional submanifold and $\dv \varphi_x-I$ is invertible for every fixed point $x.$ We emphasize, that in order to prove the existence of asymptotic expansions without the transversally cut assumption, we appeal to Theorem \ref{AnalyticalResult2}. See also the very recent work of Louis Ioos \cite{Ioos}, which gives an explicit computation of the leading order coefficient in the expansion, in the case where $\M^{\varphi} $ is transversely cut out.
\end{remark}

Let us finally comment on the projective ambiguity of $Z^{(k)}_{\text{CS}}.$ This discussion draws upon recent work by the first named author and his former PhD student Skovgaard Poulsen. In their joint work \cite{ASP16,ASP17,ASP18} it is proven that the level $k$ Hitchin connection has curvature 
\[
F_{\nabla^{H,k}}=\frac{ik(n^2-1)}{12(k+n)\pi}\omega_{\T},
\] where $\omega_\T$ is the Weil-Petersson K\"{a}hler form. The construction of $Z^{(k)}_{\text{CS}}(\varphi)$ depends on the choice of a complex structure on $\Sigma,$ as well as a choice of a curve $\gamma$ from $\varphi^*\sigma$ to $\sigma$ in Teichmüller space, along which, we compute the parallel transport with respect to the Hitchin connection. Using the result of \cite{ASP16,ASP17,ASP18}, it follows that the sequence $Z_{\text{CS}}^{(k)}(\varphi)$ admits an asymptotic expansion as $k$ tends to $+\infty$ independent of the choice of the above mentioned choice of the curve $\gamma$.

 This paper is organized as follows. \textbf{Section \ref{Holomorphic quantization}} introduces the general setup of geometric quantization and Hitchin connections, as well as pre-quantum actions, in the general context of \cite{Andersen06,Andersen12}. \textbf{Section \ref{Quantum Characters}} relates the large $k$ asymptotic of the character for a pre-quantum action to the large $k$ asymptotic of certain oscillatory integrals, and identifies conditions that ensure the existence of asymptotic expansions of these followed by a proof of Theorem \ref{AnalyticalResult2}. \textbf{Section \ref{TQFT}} relates the general results obtained in the previous section to the case of moduli spaces and topological quantum field theory, thus proving Theorem \ref{1.1}, Theorem \ref{MainTheorem2} and Theorem \ref{NiceResult}. In an appendix we provide the computational details concerning the example of mapping class for which the fixed point set is not cut out transversely, but for which our results applies.

\paragraph*{Acknowledgements.} The authors wish to thank Søren Fuglede Jørgensen for initiating the second named author to parts of the relevant setup for this paper.

\section{The Hitchin connection and quantum representations} \label{Holomorphic quantization}

We recall the basic setup of \cite{Andersen06,Andersen12}. A \textit{pre-quantum line bundle} is a line bundle $(L,h,\nabla)\rightarrow (M,\omega),$ where $(M,\omega)$ is a symplectic manifold of real dimension $2n_0$, $h$ is a hermitian metric, and $\nabla$ is a unitary connection on $L$ with curvature $-i\omega.$ From now on, we shall assume that $M$ is compact, $b_1(M)$ vanishes and that there exists an integer $n_1,$ such that the first Chern class of $(M,\omega)$ equals $n_1[\omega]$. Assume that  $J$ is a complex structure on $M$ such that $(M,\omega,J)$ is a Kähler manifold denoted $M_J$. The \textit{$level$ $k$ quantum Hilbert space} associated to $(L,h,\nabla)\rightarrow (M,J,\omega)$ by \textit{geometric Kähler quantization} is given by the space of global holomorphic sections 
\begin{equation}
H^{(k)}:=H^0\left(M_J, L^{\otimes k}\right).
\end{equation}
This is equipped with the $L_2$ metric with respect to the volume form $\Omega=\frac{\omega^n}{n!(2\pi)^n}.$ A \textit{family of Kähler structures} is a smooth map $J:\T\rightarrow C^{\infty}(M,\text{End}(T M)),$ where $\T$ is a smooth manifold, such that $M_\sigma := (M,\omega,J(\sigma))$ is a Kähler manifold for each $\sigma \in \T.$ Given such a family, we consider the trivial bundle $ \mathcal{H}^k=\T\times C^{\infty}(M,L^k)\rightarrow \T,$ and we assume there is a subbundle $H^{(k)} \rightarrow \T,$ whose fiber at a point $\sigma$ is given by
\begin{equation}
H_\sigma^{(k)}=H^0( M_{\sigma}, L_{\sigma}^k).
\end{equation}
  We consider connections in $\mathcal{H}^k$ of the form
$
\nabla^t+u,
$
where $\nabla^t$ is the trivial connection, and $u$ is a $1$-form on $\mathcal{T}$ with values in differential operators acting on smooth sections of $L^k.$
A connection in $\mathcal{H}^k$ of this form is called a \textit{Hitchin connection} if it preserves $H^{(k)}$ \cite{Andersen12,AndersenGammelgaard14,AR}. We shall assume that the family 
$$J:\T \rightarrow C^{\infty}(M,\text{End}(TM)),$$ satisfies a condition called \textit{rigidity}, which is defined in \cite{Andersen12}. By the results of \cite{Andersen12} this will ensure that for each $k$ there exists a Hitchin connection, that we shall denote by $\nabla^{H,k}.$ Furthermore, we shall assume that for each $\sigma \in \T,$ the Kähler manifold $M_\sigma$ admits no non-zero global holomorphic vector fields. By the results of joint work by the first named author and Gammelgaard \cite{AndersenGammelgaard14}, this will ensure that the Hitchin connection is projectively flat.

 A \textit{pre-quantum action} of a group $\Gamma$ on $(\T,M,L)$ consists of an action of $\Gamma$ on $M$ by symplectomorphisms and an action of $\Gamma$ on $\T$ by diffeomorphisms, which are compatible in the sense that for all $\varphi \in \Gamma$ and for all $\sigma \in \T$ the associated symplectomorphism $\varphi:(M,\sigma)\rightarrow (M,\varphi.\sigma)$ is a biholomorphism. We note that for each $\varphi \in \Gamma$ and each $k \in \N$, there is a unique (up to a $U(1)$ multiplicative factor) unit norm smooth section
\begin{equation}
\tilde{\varphi} \in C^\infty(M, \varphi^*\left( L \right) \otimes  L^*),
\end{equation}
which is parallel with respect to the connection naturally induced by $\nabla$. Its $k$'th tensor power  induce a lift of the action of $\varphi$ on $M$ to a smooth bundle morphism
$$ \varphi^{(k)} : L \rightarrow L,$$
which therefore also induces a lift of $\varphi's$ action on $\mathcal T$ to a smooth bundle morphism
\begin{equation}
\varphi^{(k)}: H^{(k)} \rightarrow H^{(k)},
\end{equation}
that preserves the Hitchin connection on $H^{(k)}.$

If we have \textit{a real analytic structure on $(M,L)$} compatible with the family of complex structures $\mathcal T$ we can of course require that the pre-quantum action preserves this real analytic structure. We will see that this will allow us to obtain more control over the asymptotic expansions.

For any $k\in \N$ and any $\sigma \in \T$ we have a homomorphism
\begin{equation} \label{Projrep}
 \Gamma \rightarrow \ \text{PGL}\left(H_{\sigma}^{(k)}\right),
\end{equation} which for each element $\varphi \in \Gamma$ admits a representative
\begin{equation}
Z^{(k)}(\varphi) \in \text{GL}\left(H_{\sigma}^{(k)}\right),
\end{equation} constructed as follows. Choose a smooth curve $\gamma:I \rightarrow \T$ starting at $\varphi.\sigma$ and ending at $\sigma.$ Let $P_{\gamma}^{(k)}$ be parallel transport of the level $k$ Hitchin connection along $\gamma.$ By definition $Z^{(k)}(\varphi)$ acts via the composition
\begin{equation} \label{constructionofaction}
H^{(k)}_\sigma  \overset{{\varphi}^{(k)}}{\longrightarrow} H^{(k)} _{\varphi. \sigma} \overset{P_{\gamma}^{(k)}}{\longrightarrow} H^{(k)}_\sigma.
\end{equation}
The homomorphism \eqref{Projrep} is referred to as the \textit{quantum representation}.

Let $L^2(L^k)$ be the complex Hilbert space of square integrable sections of $L^k$, and let the \textit{$k$-th Bergman projector}  be the orthogonal projection
\begin{equation}
\pi^{(k)}_\sigma: L^2(L^k) \rightarrow H_\sigma^{(k)},
\end{equation}
whose kernel $K_\sigma^{(k)} \in C^{\infty}(M\times M , L^k \boxtimes (L^*)^k)$ is the so called \textit{$k$-th Bergman kernel}, the asymptotic of which was presented in joint work by Karabegov and Schlichenmaier \cite{KS01}. Their results are formulated  in terms of the kernel $B_{\sigma}^{(k)}\in C^{\infty}(X\times X)$ of the projector of $L^2(X)$ onto the Hardy space of $k$-homogeneous holomorphic functions on $X,$ the $U(1)$-bundle of $L^*.$ To adapt their results to our setting, we note that for two unit norm sections $\psi:U_1,\rightarrow L,\phi:U_2 \rightarrow L$  we have for all $p_1 \in U_1$ and $p_2 \in U_2$ that
\begin{equation} \label{relatingkernels}
K_{\sigma}^{(k)}(p_1,p_2)=B_{\sigma}^{(k)}(\psi^*(p_1),\phi^*(p_2))\left(\psi^k(p_1)\right)\otimes \left(\phi^k(p_2)\right)^*,\end{equation} where for a frame $\eta$ of $L,$ we let $\eta^*$ be the dual co-frame. Let $s: U \rightarrow L$ be a non-vanishing local holomorphic section on a contractible complex coordinate neighborhood $U$ with holomorphic coordinates $z.$ Let $\alpha= s/\lvert s\rvert.$ %Let $\hat{K}^{(k)} \in C^{\infty}(U\times U,\C)$ be defined by \begin{equation} K^{(k)}=\hat{K}^{(k)} \alpha^k \otimes \left(\alpha^*\right)^k.\end{equation}
 Define $\Phi$ by
\begin{equation}\label{Phi}
\Phi=\log(h'(s^*)),
\end{equation}
where $h'$ is the induced metric on $L^*.$ %and $s^*$ is the holomorphic section of $L^*$ over $U$, which evaluates to $1$ on $s$. 
Write $ y=(y_1,y_2)$ for the holomorphic coordinates on $U\times U$ naturally induced by the coordinates $z,$ and let $\Delta:U \rightarrow U\times U$ be the diagonal map. Let
 \begin{equation} \label{tildePhi} \tilde{\Phi} \in C^{\infty}(U\times \overline{U}),\end{equation} 
 be an almost analytical extension of $\Phi$, e.g. \begin{equation} \label{diagonal}
\tilde{\Phi}\circ \Delta=\Phi,
\end{equation}
and for $(v_{1},...,v_{m})$ any (possible empty) string with $v_i \in \{y_1,\overline{y}_1,y_2,\overline{y}_2\}$, we have 
\begin{equation}\begin{split}\label{vanishatDIA}
0=\frac{\partial^{m+1} \tilde{\Phi}}{\partial v_1 \cdots \partial v_m \partial \overline{y}_1}\circ \Delta
=\frac{\partial^{m+1} \tilde{\Phi}}{\partial v_1 \cdots \partial v_m \partial y_2}\circ \Delta.
\end{split}
\end{equation}
Let $\pi_i:U\times U \rightarrow U$ be the projection on each of the two factors $i=1,2$. We define $\chi$ as follows
\begin{equation}\label{chi}
\chi=\tilde{\Phi}-1/2(\Phi\circ \pi_1+\Phi\circ \pi_2).
\end{equation}
Shrinking $U$ further if necessary, we can assume that for $p_1\not=p_2$ we have
\begin{equation}\label{nonpositiveofchiawayfromdiag}
\text{Re}(\chi)(p_1,p_2)<0.
\end{equation} As stated in Theorem $5.6$ in \cite{KS01}, there exists a sequence $\{\tilde{b}_v\}$ of smooth functions such that for any compact subset $E \subset U \times U$ and $N\in \N$ one has 
\begin{equation} \label{expandingBergmankernel}
\sup_{(p_1,p_2) \in E} \left\lvert B_{\sigma}^{(k)}(\alpha^*(p_1),\alpha^*(p_2))-k^{n_0} e^{k\chi(p_1,p_2)} \sum_{v=0}^{N-1} k^{-v}\tilde{b}_v(p_1,p_2) \right\rvert =O\left(k^{n_0-N}\right).
\end{equation}
The functions $\tilde{b}_v$ are related to work of Zelditch \cite{Zelditch}. We now define the Toeplitz operator associated to any smooth function $f \in C^{\infty}(M)$ as the composition of the multiplication operator $M_f: L^2(L^k)\rightarrow L^2(L^k),$ given by $M_f(s) = fs,$ with the Bergman projection $\pi^{(k)}_\sigma$
\begin{equation}
T_{f,\sigma}^{(k)}=\pi^{(k)}_\sigma \circ M_f: L^2(L^k) \rightarrow H^{(k)}_{\sigma}.
\end{equation}
 Let $\sigma_1,\sigma_0 \in \T$ and $f \in C^{\infty}(M,\C).$ We introduce the following notation
\begin{equation}
T_{f,(\sigma_0,\sigma_1)}^{(k)}={T_{f,\sigma_1}^{(k)}}_{\mid H_{\sigma_0}^{(k)}}:H_{\sigma_0}^{(k)} \rightarrow H_{\sigma_1}^{(k)}.
\end{equation}
For references on Toeplitz operator theory and how it is related to deformation quantization please see \cite{KS01,AMS} and the following works of Bordemann-Meinrenken, and Schlichenmaier \cite{BordemannMeinrenken,Schlichenmaier2000,Schlichenmaier2001}. For further important work on semi-classical aspects of Toeplitz operator theory related to this paper we refer to the works of Boutet de Monvel and Guillemin  and Boutet de Monvel and Sjöstrand \cite{BdMG,BdMS}.

\section{Asymptotic expansions of quantum characters} \label{Quantum Characters}

Let $\varphi \in \Gamma,$ choose $\sigma_1\in \T$ and let $\gamma:I \rightarrow \T$ be a smooth curve starting at $\sigma_0=\varphi.\sigma_1$ and ending at $\sigma_1.$  We are interested in calculating the asymptotic of $\tr (Z^{(k)}(\varphi))$ as $k \rightarrow +\infty.$

\subsection{Quantum characters as oscillatory integrals}

Our first step is to analyze the asymptotic behavior of $P_{\gamma}^{(k)}$ by expanding this operator in  $\tilde{k}$ with coefficients in Toeplitz operators, thus linking the Hitchin connection with the Bergman projector. Here
$
\tilde{k}=k+\frac{n_1}{2},
$
where $n_1$ is as above. The reason we work with this shift of $k$ stems from the fact that this shift occurs in the Hitchin connection \cite{Andersen06,Andersen12,Hitchin}. In \cite{Andersen06} the first named author proves that there exists a sequence of smooth functions $\{f_u\}$ such that with respect to the operator norm, we have for all integers $m$ that
\begin{equation}\label{VIP}
\lVert P_{\gamma}^{(k)}-\sum_{u=0}^m T^{(k)}_{f_u,(\sigma_0,\sigma_1)}{\tilde k}^{-u} \rVert =O\left(k^{-(m+1)}\right).
\end{equation}
Theorem 6 in \cite{Andersen06} states this theorem for $m=1$, and the proof of Theorem 6 in that paper is easily seen to also prove this stronger version holding for all integers $m$.

Let us now cover $M^{\varphi}=\{x \in M: \varphi(x)=x\}$ by finitely many holomorphic coordinate charts 
\begin{equation} \label{U_w}
\{U_w\},
\end{equation} such that $L_{\mid U_w}$  admits a holomorphic frame $s_w$ and define for each $w$ the functions $\Phi_w,\tilde{\Phi}_w,$ and $\chi_w$ as above in \eqref{Phi}, \eqref{tildePhi} and \eqref{chi}. Let $\alpha_w$ be the norm $1$ normalization of $s_w.$ As $\tilde{\varphi}$ is unitary, there is a smooth $\R$-valued function $\theta_w$ such that $\tilde{\varphi}(\alpha_w)=\exp(i\theta_w)\varphi^*(\alpha_w).$ Define $R=(I,\varphi):M \rightarrow M \times M$ and the important smooth function $P_w^{\varphi}=P_w$ defined on $U_w$ by
\begin{equation} \label{Phase}
P_w=i\theta_w+\chi_w\circ R.
\end{equation}
We are now ready to expand $\tr\left(Z^{(k)}(\varphi)\right)$ as a sum of products of powers of $\tilde{k}$ and oscillatory integrals.

\begin{theorem}\label{QIasOI}
There exists a sequence of smooth compactly supported top forms $\Omega_n^w \in \Omega^{2n_0}(U_w) $ giving an asymptotic expansion \begin{equation}\label{lolExpansion of Trace}\begin{split}
\tr \left(Z^{(k)}(\varphi)\right) =\tilde{k}^{n_0} \sum_{n=0}^{N} \sum_{w }\left( \int_{U_w}  e^{\tilde{k}P^{\varphi}_w} \ \Omega^w_{n} \right) \tilde{k}^{-n} +O\left(k^{n_0-(N+1)}\right), \end{split}
\end{equation}
for each $N \in \N.$ If the pre-quantum action is real analytic, there is a function $P^{\varphi}: V \rightarrow \C / 2\pi i \Z,$ where $V$ is an open neighbourhood of $M^{\varphi},$ whose imaginary part and real part are both real analytic, and which satisfies
\begin{equation}
P^{\varphi}_{\mid {U_w}}= P^{\varphi}_w \ \text{mod} \ 2\pi i \Z.
\end{equation}
\end{theorem}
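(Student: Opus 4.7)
The plan is to insert the Toeplitz expansion \eqref{VIP} of the parallel transport into $Z^{(k)}(\varphi)=P_\gamma^{(k)}\circ\varphi^{(k)}$, compute each resulting operator trace as an integral against the Bergman kernel, and apply the Karabegov--Schlichenmaier asymptotic \eqref{expandingBergmankernel} locally on each chart $U_w$. Since $\dim H_{\sigma_1}^{(k)}$ grows polynomially in $k$ (with leading order $k^{n_0}$) by Riemann--Roch, the operator-norm bound in \eqref{VIP} converts to a bound on the trace after absorbing a factor of $k^{n_0}$, so the task reduces to producing an asymptotic expansion of $\tr(T_{f_u,(\sigma_0,\sigma_1)}^{(k)}\circ \varphi^{(k)})$ for each $u$.

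Next, using the reproducing property of $K^{(k)}_{\sigma_1}$, I would rewrite each such trace as an integral
\begin{equation}
\tr\bigl(T_{f_u,(\sigma_0,\sigma_1)}^{(k)}\circ \varphi^{(k)}\bigr)
= \int_M f_u(\varphi(q))\,\bigl\langle K^{(k)}_{\sigma_1}(q,\varphi(q)),\, \tilde{\varphi}(q)^{\otimes k}\bigr\rangle\,\Omega(q),
\end{equation}
the pairing being the natural contraction between the kernel and the bundle lift $\tilde{\varphi}^{\otimes k}$. Choose a partition of unity $\{\rho_w\}\cup\{\rho_0\}$ subordinate to $\{U_w\}\cup\{U_0\}$ with $U_0\subset M\setminus M^\varphi$. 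On $U_0$ the pair $(q,\varphi(q))$ is uniformly bounded away from the diagonal, so combining \eqref{relatingkernels}, \eqref{expandingBergmankernel}, and the strict negativity \eqref{nonpositiveofchiawayfromdiag} yields an $O(k^{-\infty})$ contribution. On each $U_w$, plugging \eqref{relatingkernels} with the frame $\alpha_w$ into \eqref{expandingBergmankernel} and using $\tilde{\varphi}(\alpha_w)=e^{i\theta_w}\varphi^*(\alpha_w)$ turns the integrand into $\tilde{k}^{n_0}e^{\tilde{k}(i\theta_w+\chi_w\circ R)}$ times a symbol polyhomogeneous in $\tilde{k}^{-1}$, i.e.\ $\tilde{k}^{n_0}e^{\tilde{k}P_w^\varphi}$ times a smooth density; the shift $k\mapsto\tilde{k}=k+n_1/2$ in the exponent is harmlessly absorbed into the symbol. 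Collecting orders of $\tilde{k}^{-n}$ across the Toeplitz and Bergman indices and multiplying by $\rho_w$ packages the expansion into compactly supported top forms $\Omega_n^w$ on $U_w$, yielding \eqref{lolExpansion of Trace}.

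For the second statement, in the real analytic setting one may take each $s_w$ and the ambient coordinates to be real analytic and each $\tilde{\Phi}_w$ to be the genuine holomorphic extension of $\Phi_w$; then $\chi_w$ is real analytic near the diagonal, $\theta_w$ is real analytic because $\tilde{\varphi}$ is parallel with respect to a real analytic connection, and so $P_w^\varphi$ is real analytic near $U_w\cap M^\varphi$. To glue, on an overlap with transition $s_{w'}=g\, s_w$, the identity $\Phi_{w'}=\Phi_w-\log g-\overline{\log g}$ (combined with the corresponding change in the holomorphic extension) gives
\begin{equation}
\chi_{w'}(p_1,p_2)-\chi_w(p_1,p_2)\equiv i\bigl(\arg g(p_2)-\arg g(p_1)\bigr) \pmod{2\pi i\Z},
\end{equation}
while $\alpha_{w'}=(g/|g|)\alpha_w$ together with $\tilde{\varphi}(\alpha_{w'})=e^{i\theta_{w'}}\varphi^*(\alpha_{w'})$ yields $\theta_{w'}\equiv\theta_w+\arg g(p)-\arg g(\varphi(p))\pmod{2\pi}$. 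Precomposing the first with $R=(I,\varphi)$ and summing gives $P_{w'}^\varphi\equiv P_w^\varphi\pmod{2\pi i\Z}$ on $U_w\cap U_{w'}$, so the local phases glue to a $\C/2\pi i\Z$-valued function $P^\varphi$ on $V=\bigcup_w U_w$ whose real and imaginary parts are real analytic.

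The main obstacle is the gluing computation: because each $P_w^\varphi$ is built from non-canonical data (the holomorphic frame $s_w$, the branch of $\log g$, and the choice of almost analytic extension $\tilde{\Phi}_w$), one must verify that the symmetric combination defining $\chi_w$ together with the compensating variation of $\theta_w$ cancels exactly modulo $2\pi i\Z$. The remainder of the argument is routine bookkeeping combining Toeplitz expansions, the Karabegov--Schlichenmaier asymptotic, partition-of-unity cutoffs, and off-diagonal exponential decay.
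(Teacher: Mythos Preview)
Your proposal is correct and follows essentially the same route as the paper: Toeplitz expansion \eqref{VIP}, trace via the Bergman kernel, localization by partition of unity, Karabegov--Schlichenmaier asymptotics on each $U_w$, and a power series substitution to pass from $k$ to $\tilde{k}$. Your explicit invocation of Riemann--Roch to convert the operator-norm remainder in \eqref{VIP} into a trace remainder is a point the paper leaves implicit.

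Two small remarks. First, for the contribution supported on $U_0\subset M\setminus M^\varphi$ you appeal to \eqref{expandingBergmankernel} together with \eqref{nonpositiveofchiawayfromdiag}, but both of those statements are local to a single chart $U_w\times U_w$; on $U_0$ the pair $(q,\varphi(q))$ need not lie in any common chart. What you actually want here is the global off-diagonal decay of the Bergman kernel (equation~5.3 in \cite{KS01}, as the paper cites), which gives the $O(k^{-\infty})$ bound directly. Second, your gluing argument is the same computation as the paper's, only organized differently: the paper fixes a chart, replaces $s$ by $e^g s$, and checks that $P^\varphi_s$ is literally unchanged when one uses the preferred polarized extension $\hat{\Phi}_s$; you instead compute the transition laws for $\chi_w$ and $\theta_w$ separately and observe the cancellation. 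Both rely on taking $\tilde{\Phi}_w$ to be the canonical holomorphic/anti-holomorphic extension, which is the substantive point.
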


\begin{proof}
By \eqref{VIP} we have
 \begin{equation}\label{FirstEstimate}
 \tr \left(Z^{(k)}(\varphi)\right)= \sum_{u=0}^m \tr\left( T^{(k)}_{f_u,(\sigma_0,\sigma_1)}\circ \varphi^{(k)}\right)\tilde{k}^{-u}+O\left(k^{-(m+1)}\right).
 \end{equation}

As $\varphi$ is a symplectomorphism, the following formula is valid for any holomorphic section $s$ of $L^k$ and any $x\in M.$
\begin{align*}
\begin{split}
&(\pi_{\sigma_1}^{(k)} \circ (f \varphi^{(k)}) (s))(x)
\\&=\int_{M} K_{\sigma_1}^{(k)}(x,y) f(y) \tilde{\varphi}^{\otimes k}(s)(\varphi^{-1}(y)) \ \Omega(y)
\\&=\int_{M} K_{\sigma_1}^{(k)}(x,\varphi(y)) f(\varphi(y)) \tilde{\varphi}^{\otimes k}(s)(y) \ \Omega(y).
\end{split}
\end{align*}
An operator $P=\int B(x,y) \ \dv y$ given by a smooth integral kernel is trace class and its trace is given by
$$\tr (P)= \int B(y,y) \ \dv y.$$ From equation $5.3$ in \cite{KS01} it follows that away from the diagonal, the Bergman kernel is locally uniformly $O\left(k^{-N} \right)$ for every $N \in \N.$ By choosing a partition of unity $ (\mu_w)$ subordinate to the cover $\{ U_w\}$ of $M^\varphi$ and combining the above considerations, we arrive at 
\begin{equation} \label{TrIntegralKernel}
\tr\left( T^{(k)}_{f,(\sigma_0,\sigma_1)}\circ \varphi^{(k)}\right) \approx \sum_{w} \int \mu_w(y)f(\varphi(y)) B_{\sigma_1}^{(k)}(\alpha_w^*(y),\alpha^*_w(\varphi(y))e^{ki\theta_w(y)} \ \Omega(y),
\end{equation}
where $\approx$ means equality up to addition of a function of $k$ which is $O\left(k^{-N}\right)$ for every $N>0.$ Recall $\{\tilde{b}_v\}$ from \eqref{expandingBergmankernel} and define for $u,v \geq 0$ the function  $f^w_{u,v} \in C^{\infty}(U_w)$ as follows
\begin{equation}
f^w_{u,v}=\mu_w\cdot f_u \circ \varphi \cdot \tilde{b}_v\circ R.
\end{equation} 
Applying \eqref{expandingBergmankernel} to the integrand of \eqref{TrIntegralKernel}, we obtain the following expansion
\begin{align}
\begin{split}
\\ \tr \left(Z^{(k)}(\varphi)\right)& = \sum_{u=0}^m \tr\left( T^{(k)}_{f_u,(\sigma_0,\sigma_1)}\circ \varphi^{(k)}\right)\tilde{k}^{-u}+O\left(k^{-(m+1)}\right)
\\&=k^{n_0} \sum_{u,v=0}^{N} \sum_{w}\left( \int_{U_w} f^w_{u,v} e^{kP_w} \ \Omega \right) k^{-v}\tilde{k}^{-u}
\\ & +O\left(k^{n_0-(N+1)}\right). \end{split}
\end{align}
Here $P_w$ is given by \eqref{Phase}. This proves the first part of the theorem by a simple power series substitution relating $k^{-1}$ to $\tilde{k}^{-1}$. 

 In order to prove the second half, we first make the following observation. Suppose $V\subset \C^n $ is an open neighborhood of the origin and $g$ a real analytic function on $V$, then there is a preferred almost analytical extension $\hat{g}$ to $V\times V$ which satisfies 
\begin{equation}
\hat{g}(v_1,v_2)=\overline{\hat{g}(v_2,v_1)},
\end{equation}
and the map $g \mapsto \hat{g}$ is $\R$-linear. Writing 
 \begin{equation}
 g(v)=\sum c_{a,b}v^{a}\overline{v}^b,
 \end{equation}
 near $0$ for $a,b \in \N^n,$ we take 
 \begin{equation}
 \hat{g}(v_1,v_2)=\sum c_{a,b}v_1^{a}\overline{v_2}^b.
 \end{equation}
 As $g$ is real-valed we have,
  \begin{equation} \label{c_{a,b}}
 c_{b,a}=\overline{c_{a,b}}.
 \end{equation}
 Hence $\hat{g}$ satisfies all of the desired properties.

Assume that the pre-quantum action is real analytic. Note that this entails that the real and imaginary parts of $P_w$ are both real analytic. The potential ambiguity in defining $P$ lies in the choice of the section $s=s_w,$ the choice of a real analytic extension $\tilde{\Phi}_s(y_1,y_2)$ of $\Phi_s,$ and the choice of $\theta_w$ gives a $2\pi i\Z$ ambiguity, which we shall ignore for now for notational convenience. Write $P_w=P_s,$ where $s$ is the chosen section. We claim that if we choose 
\begin{equation}
\tilde{\Phi}_s=\hat{\Phi}_s,
\end{equation} 
then $P_s$ will in fact be independent of $s.$  Any other choice of $s$ will be of the form 
\begin{equation} s'=e^gs,\end{equation} for some holomorphic function $g.$  With the obvious notation, we have that
\begin{gather}
\Phi_{s'}=-g-\overline{g}+\Phi_s, \label{ramram}
\ \ \ i\theta_{s'}=i\theta_{s}+2^{-1}( g-g\circ \varphi+\overline{g}\circ  \varphi-\overline{g}).
\end{gather}
From \eqref{ramram}, holomorphicity of $g$ and linearity of $r \mapsto \hat{r}$ we observe
\begin{equation}
\hat{\Phi}_{s'}(y_1,y_2)=-g(y_1)-\overline{g}(y_2)+\hat{\Phi}_s(y_1,y_2). 
\end{equation}
Combining these observations, we can make the following computation
\begin{align}
P^{\varphi}_{s'}(z)&= i\theta_{s'}(z)+\left(\hat{\Phi}_{s'}(z,\varphi(z))-2^{-1}\left( \Phi_{s'}(\varphi(z)) +\Phi_{s'}(z)\right) \right)
\\ &= i\theta_{s}(z)+\left(\hat{\Phi}_{s}(z,\varphi(z))-2^{-1}\left( \Phi_{s}(\varphi(z)) +\Phi_{s}(z)\right) \right)
\\ &+2^{-1}( g(z)-g(\varphi(z))+\overline{g}(\varphi(z))-\overline{g}(z))
\\&-g(z)-\overline{g}(\varphi(z))
\\ &+ 2^{-1}\left(g(z)+\overline{g}(z)+g(\varphi(z))+\overline{g}(\varphi(z)) \right)
\\ &=i\theta_{s}(z)+\left(\hat{\Phi}_{s}(z,\varphi(z))-2^{-1}\left( \Phi_{s}(\varphi(z)) +\Phi_{s}(z)\right) \right)
\\ &= P^{\varphi}_s(z).
\end{align}
This concludes the proof.
\end{proof}

In order to analyze further the expansion given in \eqref{lolExpansion of Trace}, we turn to the integrals \begin{equation}
I(w,n,k)=\int_{U_w}  e^{\tilde{k}P_w} \ \Omega^w_n.
\end{equation}
The large $k$ behavior of $I(w,n,k)$ localize to $U_w\cap M^{\varphi},$ as the real part of $P_w$ is strictly negative away from the fixed point set cf. \eqref{nonpositiveofchiawayfromdiag}. We prove that the set of fixed points of $\varphi$ correspond to stationary points, and we proceed to examine the Hessian of $P_w^{\varphi}$ at a fixed point.

 As the analysis of the phases are purely local, we may omit the indexes. We focus on a holomorphic coordinate chart $U$ centered at a fixed point $p,$ with Kähler coordinates $z$ satisfying
  \begin{equation} \label{KahlerCoordinates}
  -i\omega(p)=2^{-1} \sum_l \dv z_l \wedge \dv \overline{z}_l. 
 \end{equation} Let $y=(y_1,y_2)$ be the holomorphic coordinates on $U\times U,$ which are naturally induced by $z.$ For the rest of Subsection \ref{QIasOI}, we use the coordinates $z$ on $U,$ and the coordinates $y$ on $U\times U.$ 

  Recall that the construction of the phase $P$ function involves a choice of holomorphic frame 
\begin{equation}\label{s}
s: U \rightarrow L\setminus\{0\}.\end{equation}
 Define $\zeta=\zeta_s$ by writing the lift $\tilde{\varphi}$ with respect to the frames $s$ and $\varphi^*(s)$ as follows
\begin{equation}\label{zeta}
\tilde{\varphi}(s)(z)=\zeta(z) s(\varphi(z)).
\end{equation}
As $\zeta$ is non-vanishing, we can write (shrinking $U$ if necessary)  
\begin{equation}
\zeta=\exp(\lambda+i\theta),
\end{equation}
where $\theta$ and $\lambda$ are smooth real valued functions. In the  notation above, $\theta_w=\theta.$ As $\tilde{\varphi}$ is unitary we can write
\begin{equation}
\tilde{\varphi}(\exp(2^{-1}\Phi)s)=\exp(i\theta)\left(\exp(2^{-1}\Phi\circ \varphi)\varphi^*(s)\right).
\end{equation}
From this we conclude that
\begin{equation} \label{realpartoflift}
\lambda=2^{-1}(\Phi\circ \varphi-\Phi).
\end{equation}

We shall now prove that all fixed points of $\varphi$ are indeed stationary points of $P.$ Moreover, we can arrange that $\chi$ has a stationary point at $(p,p)$ by choosing the holomorphic section $s$ in \eqref{s} suitably. Write
$$I=I_{n_0\times n_0},$$
for the identity matrix of dimension $n_0.$
\begin{Lemma}\label{Prop1}
We have
\begin{equation} \label{fixedpoints=stationary}U \cap M^{\varphi}\subset \{ q \in U \mid \dv P_q=0\}.\end{equation}
Moreover, we can choose the holomorphic section $s$ in \eqref{s}  such that
\begin{gather} \begin{split} \label{dP=0} \dv \Phi_p=0, \ \ \ \dv \chi_{(p,p)}=0, \ \ \ \frac{\partial^2 \Phi}{\partial z^2}(p)=0, \ \ \ \frac{\partial^2 \Phi}{\partial z \partial \overline{z}}(p)=2^{-1}I.\end{split}\end{gather}\end{Lemma}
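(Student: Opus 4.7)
My plan is to first adjust the holomorphic frame $s$ so as to impose the desired normalizations of $\Phi$ at $p$, and then to read off both conclusions from those normalizations.

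For the choice of frame: under $s \mapsto e^{g} s$ with $g$ holomorphic, the potential transforms as $\Phi \mapsto \Phi - g - \bar g$ by \eqref{ramram}, which changes only the pure holomorphic Taylor coefficients of $\Phi$ at $p$ and leaves the mixed coefficients $\partial^{2} \Phi/\partial z \partial \bar z$ untouched. Choosing $g$ to be a holomorphic polynomial whose first and second derivatives at $p$ match those of $\Phi$, we achieve $\partial \Phi(p) = 0$ and $\partial^{2} \Phi/\partial z^{2}(p) = 0$. Because $\Phi$ is real, $\partial \Phi(p)=0$ already gives $d\Phi(p)=0$. The identity $\partial^{2}\Phi/\partial z \partial \bar z(p) = 2^{-1}I$ does not depend on $s$ at all: it is the coordinate expression of the Chern-connection curvature relation $\partial \bar\partial \Phi = -i\omega$ together with the Kähler normal coordinate assumption \eqref{KahlerCoordinates}.

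Having normalized $s$ in this way, the vanishing $d\chi(p,p)=0$ reduces, via \eqref{chi} and $d\Phi(p)=0$, to $d\tilde\Phi(p,p)=0$. The vanishing of $\partial/\partial \bar y_{1}$ and $\partial/\partial y_{2}$ derivatives on the diagonal is built into \eqref{vanishatDIA}; the remaining two derivatives are then forced to vanish by differentiating \eqref{diagonal} once in $z_{j}$ and once in $\bar z_{j}$ at $p$ and using $d\Phi(p)=0$.

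For the inclusion $U\cap M^{\varphi}\subset \{dP=0\}$, the key input is the parallelism of the lifted section $\tilde \varphi$. The Chern-connection one-form of $\nabla$ in the frame $s$ is $A = -\partial\Phi$, so the induced tensor-product connection on $\varphi^{*}L \otimes L^{*}$ in the frame $\varphi^{*}s \otimes s^{*}$ has one-form $\partial \Phi - \varphi^{*}\partial \Phi$. Writing $\tilde\varphi = \zeta \cdot \varphi^{*}s \otimes s^{*}$, parallelism reads
\begin{equation*}
d\log \zeta = \varphi^{*}\partial \Phi - \partial \Phi.
\end{equation*}
At a fixed point $p$, our normalization gives $(\partial \Phi)_{p}=0$, so also $(\varphi^{*}\partial \Phi)_{p} = (\partial \Phi)_{p}\circ d\varphi_{p} = 0$. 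Hence $d\log \zeta(p)=0$, and separating real and imaginary parts recovers \eqref{realpartoflift} infinitesimally and yields $d\theta(p)=0$. Inserting into the definition \eqref{Phase} and combining with $d\chi(p,p)=0$ gives $dP_{p} = i\,d\theta(p) + (d\chi)_{(p,p)}\circ dR_{p} = 0$, which is the desired inclusion.

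The only genuine subtleties are bookkeeping: verifying that the frame change is compatible with the preferred extension $\hat\Phi$ used in Theorem \ref{QIasOI} (so that the value of $P$ is unaffected), and keeping track of signs in the Chern-connection convention. Notice in particular that no hypothesis on the linearization $d\varphi_{p}$ is required beyond $\varphi(p)=p$; the entire argument is local at a single fixed point.
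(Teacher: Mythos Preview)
Your treatment of the normalizations at the distinguished fixed point $p$ is correct and essentially the same as the paper's: modify $s$ by $e^{g}$ with a suitable holomorphic quadratic $g$ to kill $\partial\Phi(p)$ and $\partial^{2}\Phi/\partial z^{2}(p)$, and read off $\partial^{2}\Phi/\partial z\partial\bar z(p)=2^{-1}I$ from the curvature identity together with \eqref{KahlerCoordinates}. The deduction $d\chi_{(p,p)}=0$ from $d\Phi(p)=0$ via \eqref{diagonal}--\eqref{vanishatDIA} is also fine.

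There is, however, a genuine gap in your argument for the inclusion $U\cap M^{\varphi}\subset\{dP=0\}$. That inclusion concerns \emph{every} fixed point $q\in U$, not only the centre $p$, and it is used in exactly this strength later (in the proof of Theorem~\ref{Theorem2} one needs $\partial P/\partial u_{w}(0,v_{w})=0$ along all of $U_{w}\cap M^{\varphi}$, and Theorem~\ref{1.1} asserts that every $z\in\mathcal M^{\varphi}$ is stationary for $P_{\varphi}$). Your argument rests on the normalization $\partial\Phi(p)=0$, which you use to get both $d\log\zeta(p)=0$ and $d\chi_{(p,p)}=0$; at another fixed point $q\neq p$ in the same chart one has no reason for $\partial\Phi(q)=0$, and your deduction collapses. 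The paper proceeds differently: it fixes an arbitrary $q\in U\cap M^{\varphi}$ and computes $\partial(\chi\circ R)/\partial z(q)$, $\partial(\chi\circ R)/\partial\bar z(q)$ directly from \eqref{vanishatDIA}--\eqref{ext1}, then computes $\partial(i\theta)/\partial z(q)$, $\partial(i\theta)/\partial\bar z(q)$ from the parallelism equation $d\vartheta=\psi-\varphi^{*}\psi$ and the identity \eqref{realpartoflift}, and checks that these expressions cancel without ever invoking any normalization of $\Phi$ at $q$. Only afterwards is the frame adjusted at the specific $p$ to obtain \eqref{dP=0}.

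One could try to repair your route by proving that $dP_{q}$ is independent of the choice of holomorphic frame (so that one may renormalize separately at each $q$), but that independence is established in the paper only in the real analytic case via the preferred extension in Theorem~\ref{QIasOI}; you flag this as a ``bookkeeping subtlety'' but do not carry it out, and in the general smooth setting it would require an argument of its own.
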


\begin{proof}
Let $q$ be a fixed point of $\varphi$ and let us show that $q$ is a stationary point by comparing the derivatives of $\chi \circ R$ with the derivatives of $i\theta.$  By differentiating \eqref{diagonal} and using \eqref{vanishatDIA}  we get
\begin{gather} \begin{split} \label{ext1}
\frac{\partial \Phi}{\partial z}= \frac{\partial \tilde{\Phi}}{\partial y_1}\circ \Delta,
\ \ \
\frac{\partial \Phi}{\partial \overline{z}}= \frac{\partial \tilde{\Phi}}{\partial \overline{y}_2}\circ \Delta.
 \end{split}
\end{gather}
By using \eqref{vanishatDIA} and \eqref{ext1} one obtains
\begin{align} \begin{split}\label{Together:)}
\frac{\partial \chi\circ R}{\partial z} (q)&=2^{-1}\left( \frac{\partial \Phi}{\partial z}+\frac{\partial \Phi}{\partial \overline{z}}\frac{\partial \overline{\varphi}}{\partial z}-\frac{\partial \Phi}{\partial z}\frac{\partial \varphi}{\partial z}\right)(q),
\\
\frac{\partial \chi\circ R}{\partial \overline{z}}(q)&= 2^{-1}\left(\frac{\partial \Phi}{\partial \overline{z}}\frac{\partial \overline{\varphi}}{\partial \overline{z}}- \frac{\partial \Phi}{\partial \overline{z}}-\frac{\partial \Phi}{\partial z}\frac{\partial \varphi}{\partial \overline{z}}\right)(q).
 \end{split}
\end{align}

In order to calculate the derivatives of $i\theta$ at $q,$ we shall first relate the derivatives of $\log(\zeta)$ to $\Phi$ using that $\tilde{\varphi}$ is parallel with respect to $\widetilde{\nabla},$ and then calculate the derivatives of $i\theta$ using the identity \eqref{realpartoflift}. Here $\widetilde{\nabla}$ is the connection on $\varphi^*(L) \otimes L^*$ naturally induced from $\nabla.$ Write 
\begin{equation}
\vartheta=\log(\zeta).
\end{equation} Define the $(1,0)$-form  $\psi$ by
 \begin{equation}\label{defpsi}
 \nabla s= \psi\otimes s.
 \end{equation} Write 
 \begin{equation}
 \widetilde{\nabla}(\varphi^*(s)\otimes s^*)=\kappa \otimes \varphi^*(s) \otimes s^*,
 \end{equation} where $\kappa$ is a $1$-form. We have 
 \begin{equation}\label{Beta}
 \kappa=\varphi^*(\psi)-\psi.
 \end{equation}
Thinking of $\tilde{\varphi}$ as a section of $ \varphi^*(L)\otimes L^*$ we can write
\begin{equation}
\tilde{\varphi}=\exp(\vartheta)  \varphi^*(s)\otimes s^*.
\end{equation}
By parallelity of $\tilde{\varphi}$ we get
\begin{equation}
0=\widetilde{\nabla} \tilde{\varphi} =\left( \dv \vartheta+ \kappa\right)\left(\exp(\vartheta)  \varphi^*(s) \otimes s^* \right),
\end{equation}
which is equivalent to
\begin{equation}\label{relatingvarthetapsi}
\dv \vartheta= \psi-\varphi^*(\psi).
\end{equation}
In particular
\begin{gather}
\begin{split} \label{derivativesofvartheta1}
\frac{\partial \vartheta}{\partial z_l} = \psi^l-\sum_j \psi^j\circ \varphi \ \frac{\partial \varphi^j}{\partial z_l},
\ \ \ \frac{\partial \vartheta}{\partial \overline{z}_l} = -\sum_j \psi^j\circ \varphi \ \frac{\partial \varphi^j}{\partial \overline{z}_l}.
\end{split}
\end{gather}
As $h,\nabla$ are compatible, we have 
\begin{align} \begin{split} \label{Phirelpsi}
\dv \Phi&= -\dv \left(\log(h(s,s)\right)
\\ &=-\psi-\overline{\psi}, \end{split}
\end{align}
from which we get
\begin{gather}\label{derivativesofPhi}\begin{split} 
\frac{\partial \Phi}{\partial z_l} = -\psi^l,
\ \ \ \frac{\partial \Phi}{\partial \overline{z}_l} = -\overline{\psi^l}.\end{split}
\end{gather}
Combining this with the expressions for the derivatives of $\vartheta$ given above, we get
\begin{gather}
\begin{split} \label{derivativesofvartheta2}
\frac{\partial \vartheta}{\partial z} =\left(\frac{\partial \Phi}{\partial z}\circ \varphi \right)\frac{\partial \varphi}{\partial z}-\frac{\partial \Phi}{\partial z},
\ \ \ \frac{\partial \vartheta}{\partial \overline{z}} &= \left(\frac{\partial \Phi}{\partial z}\circ \varphi \right)\frac{\partial \varphi}{\partial \overline{z}}.
\end{split}
\end{gather}
 Combining with  \eqref{realpartoflift} this gives 
\begin{align}\begin{split}
\frac{\partial i\theta}{\partial z} &=\frac{\partial \left(\vartheta-2^{-1}\left(\Phi\circ \varphi-\Phi\right)\right)}{\partial z}
\\ &=2^{-1}\left(\left(\frac{\partial \Phi}{\partial z}\circ \varphi \right)\frac{\partial \varphi}{\partial z}-\left(\frac{\partial \Phi}{\partial \overline{z}}\circ \varphi \right)\frac{\partial \overline{\varphi}}{\partial z}-\frac{\partial \Phi}{\partial z} \right).
\end{split}
\end{align}
As $\theta,\Phi$ are real functions, we obtain
\begin{align} \begin{split} \label{derivativesofitheta}
\frac{\partial i\theta}{\partial z} &=2^{-1}\left(\left(\frac{\partial \Phi}{\partial z}\circ \varphi \right)\frac{\partial \varphi}{\partial z}-\left(\frac{\partial \Phi}{\partial \overline{z}}\circ \varphi \right)\frac{\partial \overline{\varphi}}{\partial z}-\frac{\partial \Phi}{\partial z} \right),
\\\frac{\partial i\theta}{\partial \overline{z}} &=-2^{-1}\left(\left(\frac{\partial \Phi}{\partial \overline{z}}\circ \varphi \right)\frac{\partial \overline{\varphi}}{\partial \overline{z}}-\left(\frac{\partial \Phi}{\partial z}\circ \varphi \right)\frac{\partial \varphi}{\partial \overline{z}}-\frac{\partial \Phi}{\partial \overline{z}} \right).
\end{split}
\end{align}
Comparing \eqref{derivativesofitheta} with \eqref{Together:)} we see that $q$ is a stationary point.

We now proceed to prove \eqref{dP=0}.% As $L$ is a prequantum line bundle, we have \begin{equation}\dv \psi=F_{\nabla}=-i\omega.\end{equation} As $\omega$ is of type $(1,1)$ and $\psi$ is of type $(1,0)$ this implies that \begin{equation}\partial \psi=0.\end{equation}
%From \eqref{derivativesofPhi} it follows that for all $l,j$ we have\begin{equation}\frac{\partial \psi^l(p)}{\partial z_j}=\frac{\partial \psi^j(p)}{\partial z_l}.\end{equation}
 We see that if we perform the transformation $s \mapsto \exp(r)s,$ then the $(1,0)$-form $\psi$ transform as follows $ \psi \mapsto \psi+\partial r.$ Consider the holomorphic function $r$ given by
\begin{equation}
r=\sum_l \frac{\partial \Phi}{\partial z_l}(p)z_l+\sum_{l\leq j} \frac{\partial^2 \Phi}{\partial z_l \partial z_j}(p)z_lz_j.
\end{equation}
Replace $s$ by $\acute{s}=\exp(r)s,$ and observe that by \eqref{derivativesofPhi} we have
\begin{align} \begin{split} \label{dPhi=0}
\dv \acute{\Phi}_p&=0,
\\ \frac{\partial^2 \acute{\Phi}}{\partial z_l \partial z_j}(p)&=0.
\end{split}
\end{align}
We now turn to the fourth equation. we recall that as $\nabla$ is the Chern connection of $h,$ and as $L$ is a pre-quantum line bundle, we have
\begin{equation}
\overline{\partial}\partial \log(h(\acute{s}))_p= F_{\nabla}(p)=-i\omega(p)=2^{-1} \sum_l \dv z_l \wedge \dv \overline{z}_l.
\end{equation}
Thus we obtain the desired equation, which is seen to hold independently of the choice of section  
\begin{equation}
\frac{\partial^2 \acute{\Phi}}{\partial z \partial \overline{z}}(p)=2^{-1}I.
\end{equation}
We now turn to the remaining equation. By \eqref{ext1} we see that
\begin{gather}
\begin{split}
\frac{\partial \acute{\chi}}{\partial y}\circ \Delta  = \begin{pmatrix}
2^{-1}\frac{\partial \acute{\Phi}}{\partial z},& -2^{-1}\frac{\partial \acute{\Phi} }{\partial z}\end{pmatrix},
\ \ \ \frac{\partial \acute{\chi}}{\partial \overline{y}}\circ \Delta  = \begin{pmatrix}
-2^{-1}\frac{\partial \acute{\Phi}}{\partial \overline{z}},& 2^{-1}\frac{\partial \acute{\Phi} }{\partial \overline{z}}\end{pmatrix}.
\end{split}
\end{gather}
Therefore $\acute{\chi}$ has a stationary point at $(p,p)$ as $\acute{\Phi}$ has. \end{proof}

From now on, we will assume that the holomorphic section $s:U \rightarrow L\setminus\{0\}$ in \eqref{s} has been chosen as in Lemma \ref{Prop1}. In order to calculate $\text{Hess}(P)_p$ we make use of the following lemma.
\begin{Lemma}\label{Hessian}
Assume $ Q: U \rightarrow \C,$ is a smooth function defined on an open subset $U$ of $\C^m.$ Assume 
$
R: V \rightarrow U
$ is a smooth map defined on an open subset $V$ of $\C^l.$ If $R(z)$ is a stationary point of $Q$ then the Hessian  of $G$ at $z$ is given by
\begin{equation}
\text{Hess}(Q \circ R)= \dv R^t \text{Hess}(Q) \dv R.
\end{equation}
 \end{Lemma}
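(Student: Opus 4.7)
The lemma is a direct chain-rule computation; the reason the answer is clean (no second derivative of $R$ appears) is precisely that $R(z)$ is stationary for $Q$. My plan is to treat $\C^m \cong \R^{2m}$ and compute in the Wirtinger basis $(\partial_{z_i},\partial_{\bar z_i})$, in which both $\text{Hess}(Q)$ and $\dv R$ naturally live in this paper (since $R$ in the intended application is the map $R=(I,\varphi)$, which is only smooth, not holomorphic). Then the claimed matrix identity is the standard chain-rule/product-rule identity with the second-order-in-$R$ term killed by stationarity.

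Concretely, I would first write out the first derivatives of $G := Q\circ R$ as
\begin{equation}
\partial_{\mu} G(z) \;=\; \sum_{\nu} (\partial_{\nu} Q)(R(z))\,\partial_{\mu} R^{\nu}(z),
\end{equation}
where $\mu,\nu$ range over the combined set of holomorphic and antiholomorphic indices (and where, to be fully honest about $R$ being only smooth, I include $\bar R^{\nu}$ among the components so that $\partial_{\mu}R^{\nu}$ really denotes the full real Jacobian, written in the Wirtinger basis). Differentiating once more with the product rule and the chain rule gives
\begin{equation}
\partial_{\mu}\partial_{\lambda} G(z) \;=\; \sum_{\nu,\rho} (\partial_{\rho}\partial_{\nu} Q)(R(z))\,\partial_{\lambda} R^{\rho}(z)\,\partial_{\mu} R^{\nu}(z) \;+\; \sum_{\nu}(\partial_{\nu} Q)(R(z))\,\partial_{\mu}\partial_{\lambda} R^{\nu}(z).
\end{equation}

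The hypothesis that $R(z)$ is a stationary point of $Q$ says exactly that $(\partial_{\nu}Q)(R(z))=0$ for every $\nu$, so the second sum vanishes. What remains is, in matrix form,
\begin{equation}
\text{Hess}(Q\circ R)(z) \;=\; \dv R(z)^{t}\,\text{Hess}(Q)(R(z))\,\dv R(z),
\end{equation}
which is the claim. There is really no obstacle: the only thing one has to be slightly careful about is the bookkeeping of the Wirtinger indices, which for the intended application in the next step (computing $\text{Hess}(P)_p$ via $P = i\theta + \chi\circ R$ with $R=(I,\varphi)$) is exactly the form in which the lemma will be used.
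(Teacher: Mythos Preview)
Your proof is correct and is precisely the standard chain-rule computation the paper alludes to; the paper's own proof consists of the single sentence ``This is a standard computation.'' Your careful bookkeeping of the Wirtinger indices and the observation that stationarity of $Q$ at $R(z)$ kills the second-derivative-of-$R$ term is exactly the point.
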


\begin{proof} This is a standard computation. \end{proof} The equations \eqref{dP=0} makes it significantly easier to analyze the Hessian of $P$ at a fixed point, together with Lemma \ref{Hessian}. We get a coordinate expression of the Hessian in terms of $\dv \varphi.$   \begin{Lemma}\label{theorem 1}  Write\begin{gather} \frac{\partial \varphi}{\partial z}(p)=H,\ \ \frac{\partial \varphi }{\partial \overline{z}}(p)=K.\end{gather}The Hessian of $P$ at $p$ with respect to $(\partial z, \partial \overline{z})$ is given as follows 
\begin{equation}\label{HessP2} \text{Hess}(P)_p=
4^{-1}\begin{pmatrix} I & 0\\ H & K\\ 0 & I\\ \overline{K} & \overline{H}\end{pmatrix}^t\begin{pmatrix}0 & 0 & -I & 2I\\ 0 & 0 & 0 & -I \\ -I & 0 & 0 & 0\\ 2I & -I & 0 & 0\end{pmatrix} \begin{pmatrix} I & 0\\ H & K\\ 0 & I\\ \overline{K} & \overline{H}\end{pmatrix}.\end{equation} \end{Lemma}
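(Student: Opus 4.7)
The plan is to decompose $P = i\theta + \chi\circ R$ with $R=(\mathrm{id},\varphi)$ and handle the two summands separately. By Lemma \ref{Prop1}, with the holomorphic frame $s$ chosen as in that lemma, the point $(p,p)$ is a stationary point of $\chi$, so Lemma \ref{Hessian} applies and gives $\mathrm{Hess}(\chi\circ R)_p=(\dv R_p)^t\,\mathrm{Hess}(\chi)_{(p,p)}\,(\dv R_p)$. The differential $\dv R_p$ is read off directly from $R(z)=(z,\varphi(z))$: in the bases $(\partial z,\partial\bar z)$ on the source and $(\partial y_1,\partial y_2,\partial\bar y_1,\partial\bar y_2)$ on $U\times U$, it is exactly the outer matrix appearing in \eqref{HessP2}.

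Next I would compute $\mathrm{Hess}(\chi)_{(p,p)}$ block by block, using $\chi=\tilde\Phi-2^{-1}(\Phi\circ\pi_1+\Phi\circ\pi_2)$. The almost-holomorphicity equations \eqref{vanishatDIA} force every second partial of $\tilde\Phi$ involving a $\bar y_1$ or a $y_2$ direction to vanish on the diagonal, while differentiating $\tilde\Phi\circ\Delta=\Phi$ twice expresses the surviving pure $y_1,\bar y_2$ partials in terms of $z,\bar z$ partials of $\Phi$. Inserting the normalizations $\partial^2\Phi/\partial z^2(p)=0$ and $\partial^2\Phi/\partial z\partial\bar z(p)=2^{-1}I$ from Lemma \ref{Prop1} then collapses every entry to either $0$, $-4^{-1}I$, or $2^{-1}I$, yielding exactly $4^{-1}$ times the inner matrix in \eqref{HessP2}.

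The final step is to prove $\mathrm{Hess}(i\theta)_p=0$, which closes the argument since then $\mathrm{Hess}(P)_p=\mathrm{Hess}(\chi\circ R)_p$. Writing $i\theta=\vartheta-\lambda$ with $\lambda=2^{-1}(\Phi\circ\varphi-\Phi)$ from \eqref{realpartoflift}, and differentiating \eqref{derivativesofvartheta2} one more time while exploiting $\partial\Phi/\partial z(p)=0=\partial\Phi/\partial\bar z(p)$ and the normalizations on the $2$-jet of $\Phi$, one obtains all three blocks of $\mathrm{Hess}(i\theta)_p$ as explicit quadratic expressions in $H$ and $K$. The symplectic condition $\varphi^{*}\omega=\omega$, translated in the Kähler coordinates \eqref{KahlerCoordinates}, is equivalent to the three matrix identities $H^t\bar K=\bar K^t H$, $K^t\bar H=\bar H^t K$, and $H^t\bar H-\bar K^t K=I$; the first two make the $(z,z)$- and $(\bar z,\bar z)$-blocks of $\mathrm{Hess}(i\theta)_p$ vanish, and the third eliminates the mixed $(z,\bar z)$-block.

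The main obstacle is the bookkeeping for $\mathrm{Hess}(i\theta)_p=0$: one must carefully distinguish $\partial\bar\varphi/\partial\bar z=\bar H$ from $\partial\varphi/\partial\bar z=K$ when applying the chain rule to $\Phi\circ\varphi$, and recognize that the three symplectic identities for $(H,K)$ are precisely the content of $\varphi^{*}\omega=\omega$ expressed in complex form. Once this is in place, the conclusion $\mathrm{Hess}(P)_p=(\dv R_p)^t(4^{-1}M)(\dv R_p)$ follows at once by combining the three preceding steps.
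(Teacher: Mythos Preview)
Your proposal is correct and follows essentially the same route as the paper: decompose $P=i\theta+\chi\circ R$, show $\mathrm{Hess}(i\theta)_p=0$ via the symplectic identities for $(H,K)$, compute $\mathrm{Hess}(\chi)_{(p,p)}$ block by block from \eqref{vanishatDIA} and the normalizations in Lemma~\ref{Prop1}, and then apply Lemma~\ref{Hessian} with the displayed $\dv R_p$. The only cosmetic difference is the order in which you treat the $i\theta$ and $\chi\circ R$ contributions.
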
 

In fact Lemma \ref{theorem 1} can be rephrased with the following formula valid at our fixed point $p$
\begin{equation}
\text{Hess}(P)= \dv R^t \text{Hess}(\chi) \dv R,
\end{equation}
where $R(z)=(z,\varphi(z)).$ 

\begin{proof}
We first show that the Hessian of $i\theta$ vanish at $p,$ from which we conclude that $\text{Hess}(P)_p=\text{Hess}(\chi\circ R)_p,$ and we  then derive a coordinate expression for $\text{Hess}(\chi\circ R)_p$ using Lemma \ref{Prop1} and Lemma \ref{Hessian}.

From \eqref{KahlerCoordinates} we see that the fact that $\dv \varphi_p$ is a symplectomorphism with respect to $\omega_p$ is equivalent to the following two equations
\begin{gather} \begin{split}\label{1a}
H^t\overline{H}-\overline{K}{}^tK=I, 
\ \ \ H^t\overline{K}-\overline{K}{}^tH =0. \end{split}
\end{gather}
Combining equation \eqref{derivativesofitheta} with equation \eqref{dP=0} and equation \eqref{1a} 
\begin{multline}\label{secondorderderivativesoftheta}
\begin{split}
\frac{\partial^2 i\theta}{\partial z^2} (p)&=2^{-1}\left(\frac{\partial \varphi}{\partial z}^t \left(\frac{\partial^2 \Phi}{\partial z^2}\circ \varphi \right)\frac{\partial \varphi}{\partial z} \right)(p)
\\ &+2^{-1}\left(\frac{\partial \overline{\varphi}}{\partial z}^t \left(\frac{\partial^2 \Phi}{\partial \overline{z} \partial z}\circ \varphi \right)\frac{\partial \varphi}{\partial z} \right)(p)
\\ &-2^{-1}\left(\frac{\partial \varphi}{\partial z}^t \left(\frac{\partial^2 \Phi}{\partial z \partial \overline{z}}\circ \varphi \right)\frac{\partial \overline{\varphi}}{\partial z} \right)(p)
\\ &-2^{-1}\left(\frac{\partial \overline{\varphi}}{\partial z}^t \left(\frac{\partial^2 \Phi}{ \partial \overline{z}^2}\circ \varphi \right)\frac{\partial \overline{\varphi}}{\partial z} \right)(p)
\\ &-2^{-1}\frac{\partial^2 \Phi}{\partial z^2}(p)
\\ &=4^{-1}\left( \overline{K}{}^tH-H^t \overline{K}\right)=0.
\end{split}
\end{multline}
Similarly one can use the equations \eqref{derivativesofitheta}, \eqref{dP=0} together with \eqref{1a} to show that
\begin{equation}
\frac{\partial^2 i\theta}{\partial \overline{z} \partial z }(p)=\frac{\partial^2 i\theta}{\partial z \partial \overline{z}}^t(p)=0.
\end{equation}
This concludes the first step and we obtain
\begin{equation}
\text{Hess}(P)_p= \text{Hess}(\chi\circ R)_p.
\end{equation}

 We claim that the Hessian of $\chi$ at $(p,p)$ with respect to the ordered basis $(\partial y_1, \partial y_2, \partial \overline{y}_1, \partial \overline{y}_2)$ is equal to the following matrix

\begin{equation}\label{hesschi} \text{Hess}(\chi)_{(p,p)}=
4^{-1}\begin{pmatrix}
0 & 0 & -I & 2I
\\ 0 & 0 & 0 & -I 
\\ -I & 0 & 0 & 0
\\ 2I & -I & 0 & 0
\end{pmatrix}.
\end{equation}

We shall calculate the Hessian of $\chi$ as four block matrices, and we start with $\frac{\partial^2 \chi }{\partial y^2}\circ \Delta(p).$ Differentiating equation \eqref{ext1} and using \eqref{vanishatDIA} we get
\begin{equation} \label{ext11}
\frac{\partial^2 \Phi}{\partial z^2}=\frac{\partial^2 \tilde{\Phi} }{\partial y_1^2}\circ \Delta.
\end{equation}
Combining equation \eqref{dP=0} and equation \eqref{ext11} we obtain
\begin{equation}
\frac{\partial^2\chi }{\partial y_1^2}\circ \Delta(p)=\frac{\partial^2 \tilde{\Phi} }{\partial y_1^2}\circ \Delta(p)-2^{-1}\frac{\partial^2 \Phi}{\partial z^2}(p)=0.
\end{equation}
Another two applications of \eqref{vanishatDIA} gives
\begin{equation}
\frac{\partial^2 \chi }{\partial y_1 \partial y_2}\circ \Delta(p)=\frac{\partial^2 \chi }{\partial y_2 \partial y_1}^t\circ \Delta(p)=0,
\end{equation}
and
\begin{equation}
\frac{\partial^2 \chi }{\partial y_2^2}\circ \Delta(p)=-2^{-1} \frac{\partial \Phi}{\partial z^2}(p)=0.
\end{equation}
Thus we see 
\begin{equation}
\frac{\partial^2 \chi }{\partial y^2}\circ \Delta(p)=\begin{pmatrix}
0 & 0
\\ 0 & 0
\end{pmatrix}.
\end{equation}
The computation of $\frac{\partial^2 \chi }{\partial y \partial \overline{y}}\circ \Delta(p)$ and $\frac{\partial^2 \chi }{\partial \overline{y}^2}\circ \Delta(p)$ is similar.  We can now appeal to Lemma \ref{Hessian} as
\begin{equation}
\dv R=\begin{pmatrix} I & 0
\\ H & K
\\ 0 & I
\\ \overline{K} & \overline{H}
\end{pmatrix}.
\end{equation} This concludes the proof. \end{proof}

We now address the question of non-degeneracy of the Hessian of $P$. Roughly Theorem \ref{NondegofHessPatrealcomplementstoFixedpointset} below shows that if $M^{\varphi}$ is cut out transversely, then our phase function $P$ is a Bott-Morse function.

\begin{theorem}\label{NondegofHessPatrealcomplementstoFixedpointset} Denote the symmetric bilinear form associated to $\text{Hess}(P)_p$ by $\Psi.$ We have
\begin{equation} \{ \xi \in T_p M_{\C} : \Psi(\xi,\eta)=0, \ \forall \eta \in T_pM_{\C} \}= \text{Ker}(\dv \varphi_p-I).\end{equation}Moreover, $\Psi$ is non-degenerate on any real complementary subspace to $\text{Ker}(\dv \varphi_p-I).$
\end{theorem}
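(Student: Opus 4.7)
I interpret $\Psi$ as the $\C$-valued symmetric bilinear form on $T_pM_{\C}$ determined by $\text{Hess}(P)_p$, and $\text{Ker}(\dv\varphi_p - I)$ as the $\C$-linear kernel in $T_pM_{\C}$ of the complexified real map. The plan is to use Lemma \ref{theorem 1} to write the Hessian matrix explicitly and then extract its null space by direct linear algebra, guided by the symplectic identities derivable from $\dv\varphi_p^{*}\omega_p = \omega_p$.

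First I would multiply out $\text{Hess}(P)_p = \tfrac{1}{4}\,\dv R^t B\,\dv R$ from Lemma \ref{theorem 1}. Using the two symplectic identities $H^t\overline H - \overline K^t K = I$ and $H^t\overline K = \overline K^t H$ from \eqref{1a} together with their conjugate-transpose analogues, the product collapses to the symmetric block matrix
\begin{equation*}
\text{Hess}(P)_p = \tfrac{1}{2}\begin{pmatrix}\overline K + \overline K^t - \overline K^t H & (I-H^t)\overline H \\ \overline H^t(I-H) & -\overline H^t K\end{pmatrix}.
\end{equation*}
Symmetry of this matrix is itself a short check via the same two identities.

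The core computation is the kernel of this matrix. For $\xi=(a,b)^t \in T_pM_{\C}$, the system $\text{Hess}(P)_p\,\xi = 0$ reads
\begin{align*}
(\overline K + \overline K^t - \overline K^t H)a + (I-H^t)\overline H\,b &= 0, \\
\overline H^t\bigl[(I-H)a - Kb\bigr] &= 0.
\end{align*}
Because $H^t\overline H = I + \overline K^t K$ is a sum of the identity and a Hermitian positive semi-definite matrix, it is positive definite and in particular $\overline H^t$ is invertible, so the second equation reduces to $(H-I)a + Kb = 0$ --- the first defining relation of $\text{Ker}(\dv\varphi_p-I)$. Substituting this into the first equation and expanding $(I-H^t)\overline H = \overline H - I - \overline K^t K$ rearranges it into
\begin{equation*}
\overline K a + (\overline H - I) b - \overline K^t\bigl[(H-I)a + Kb\bigr] = 0;
\end{equation*}
the bracket now vanishes by what we have just shown, yielding the second defining relation $\overline K a + (\overline H - I)b = 0$. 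The converse inclusion is immediate by the same rearrangement, so $\text{Ker}(\Psi) = \text{Ker}(\dv\varphi_p-I)$.

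The second assertion is now a standard consequence of bilinear algebra: if a symmetric bilinear form $\Psi$ on a vector space has null space $N$ and $V$ is any complement to $N$, then $\Psi|_V$ is non-degenerate, since a $v\in V$ with $\Psi(v,\cdot)|_V \equiv 0$ automatically satisfies $\Psi(v,n) = 0$ for all $n\in N$ and thus lies in $N \cap V = \{0\}$. Applying this with $V$ the complexification of a real complement to $\text{Ker}_{\R}(\dv\varphi_p-I)$ gives the statement. The hardest part of the argument is the matrix computation and algebraic reduction above; once the Hessian is cast in the right block form, the identification of its kernel with $\text{Ker}(\dv\varphi_p-I)$ is a short manipulation, but the book-keeping of transposes and complex conjugates via the symplectic identities requires care.
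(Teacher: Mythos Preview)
Your argument is correct and complete, but it proceeds quite differently from the paper. You multiply out the product $\dv R^t B\,\dv R$ in full, obtain the explicit $2\times 2$ block form of the Hessian, and then solve the linear system for its null space, using the invertibility of $\overline H$ (from the positive-definiteness of $H^t\overline H = I + \overline K^t K$) to reduce the second row to $(H-I)a+Kb=0$ and then simplify the first row. This yields $\text{null}(\Psi)=\text{Ker}(\dv\varphi_p-I)$ in one stroke, after which non-degeneracy on (complexifications of) real complements is the general fact about symmetric bilinear forms you cite. The paper, by contrast, never multiplies the matrix out: it keeps the factorization $\text{Hess}(P)_p=\tfrac14\,\dv R^t B\,\dv R$ and computes $-4\operatorname{Re}\Psi(\eta,\overline\eta)=|v'-w'|^2+|v''-w''|^2$ directly from $\dv R(\eta)=(v',w',v'',w'')$, which is nonzero precisely when $\eta\notin\text{Ker}(\dv\varphi_p-I)$; the reverse inclusion $\text{Ker}(\dv\varphi_p-I)\subset\text{null}(\Psi)$ is then checked by a separate short calculation. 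Your route is more computational but entirely transparent; the paper's is shorter and makes the geometric meaning (the Hessian sees the displacement $\eta\mapsto\dv\varphi_p(\eta)-\eta$) more visible, and its $\Psi(\eta,\overline\eta)$ argument directly handles conjugation-closed complements without passing through the general null-space lemma.
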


\begin{proof} 
Let $\eta \in T_p M_{\C}$ be any vector not fixed by $\dv \varphi.$  We shall argue that 
\begin{equation} \label{denvigtigeligning}
\Psi(\eta,\overline{\eta})\not=0,
\end{equation}
which will imply that the symmetric bilinear form $\Psi$ is non-degenerate on any subspace which is complimentary to $\text{Ker}(\dv \varphi_p-I)$ and closed under conjugation.

 Write $\dv R_p(\eta)=(v',w',v'',w'').$ We have $\dv R_p(\overline{\eta})=(\overline{v''},\overline{w''},\overline{v'},\overline{w'}),$ as $\dv \varphi$ is the complex-linear extension of a real linear endomorphism of $T_p M.$  Let $(\emptyarg, \emptyarg)$ be the standard Hermitian metric on $\C^{n_0},$ given by $(u,y)=u^t\overline{y}$, where we recall that $n_0=\text{dim}_{\C}(M).$ According to Lemma \ref{theorem 1} we have
\begin{equation}\begin{split}
-4\Psi(\eta,\overline{\eta})=(v',v')+(w',w')-2(v',w')+(v'',v'')+(w'',w'')-2(w'',v'').
 \end{split}
\end{equation}
For any two vectors $\kappa,\mu$ the real part of $(\kappa,\kappa)+(\mu,\mu)-2(\kappa,\mu)$ is equal to $\lvert \kappa-\mu \rvert^2.$ Thus \eqref{denvigtigeligning} holds, as 
\begin{equation}
-4\text{Re}\left(\Psi(\eta,\overline{\eta})\right)=\lvert v'-w'\rvert^2+\lvert v''-w'' \rvert^2 \not=0.
\end{equation}

We finish the proof by proving the inclusion 
\begin{equation}
 \{ \xi \in T_p M_{\C} : \Psi(\xi,\eta)=0, \ \forall \eta \in T_pM_{\C} \}\supset \text{Ker}\left(\dv \varphi_p-I\right).
\end{equation}  To that end, assume $\xi$ is fixed by $\dv \varphi_p.$  Write $\xi'$ for the $(1,0)$ part of $\xi$ and write $\xi''$ for the $(0,1)$ part of $\xi.$ We compute 
 \begin{align}\begin{split}
 -4\Psi(\emptyarg, \xi)&=\begin{pmatrix}
  {\pi^{(1,0)}}^t & {\dv' \varphi_p}^t & {\pi^{(0,1)}}^t & {\dv'' \varphi_p}^t\end{pmatrix} \begin{pmatrix}
0 & 0 & I & -2I
\\ 0 & 0 & 0 & I 
\\ I& 0 & 0 & 0
\\ -2I & I & 0 & 0
\end{pmatrix}\begin{pmatrix}
\xi'
\\ \xi'
\\ \xi''
\\ \xi''
\end{pmatrix}
\\ &=\left(\pi^{(0,1)}- \dv'' \varphi_p\right)^t\xi' +\left(\dv' \varphi_p-\pi^{(1,0)}\right)^t\xi''. \end{split} \end{align}
That $\xi$ is fixed by $\dv \varphi_p$ is equivalent to
\begin{gather}
\begin{split}
\xi' = H\xi'+K\xi'',
\ \ \ \xi'' &= \overline{K}\xi'+ \overline{H}\xi''.
\end{split}
\end{gather} We have
\begin{gather}\begin{split}
\dv ' \varphi_p = \begin{pmatrix}
H & K
\end{pmatrix},
\ \ \
\dv '' \varphi_p = \begin{pmatrix}
\overline{K}  \overline{H}
\end{pmatrix},
\ \ \ \pi^{(1,0)} &= \begin{pmatrix}
I & 0
\end{pmatrix},
\ \ \ \pi^{(0,1)} = \begin{pmatrix}
0 & I
\end{pmatrix}. \end{split}
\end{gather}
 Thus we can write
\begin{multline}
\left(\pi^{(0,1)}- \dv'' \varphi_p\right)^t\xi' +\left(\dv' \varphi_p-\pi^{(1,0)}\right)^t\xi''= \begin{pmatrix}
-\overline{K}^t \xi'
\\ \xi'-\overline{H}{}^t\xi'\end{pmatrix}+ \begin{pmatrix}
H^t \xi''-\xi''
\\ K^t \xi''
\end{pmatrix}.
\\ = \begin{pmatrix}
\left(H^t\overline{K}-\overline{K}{}^tH \right)\xi'+\left(H^t\overline{H}-\overline{K}{}^tK-I\right)\xi''
\\ \left(I-\overline{H}{}^t H+K^t\overline{K}\right)\xi'+\left(K^t\overline{H}-\overline{H}{}^tK \right)\xi'' 
\end{pmatrix}=0. 
\end{multline}
The last equation follows from \eqref{1a}. \end{proof}

\subsection{Asymptotic expansions}\label{AECPROOFS}

Before proving the existence of asymptotic expansions, we shall introduce some notation. Assume that $M^{\varphi}$ is a smooth submanifold, or that the action is real analytic such that $M^{\varphi}$ is a real algebraic subvariety. For each component $Y \subset M^{\varphi}$ there exists $ \theta \in \R / 2\pi\Z,$ such that  for any $q \in Y,$ the linear endomorphism
\begin{equation}
\varphi^{(k)}_q: L^{k}_q \rightarrow L^k_q,
\end{equation}
is given by multiplication by $e^{ik\theta}.$ Let $\{
\theta_j  \in \R / 2\pi \Z \}$ be the set of arguments defined in this way.

We now recall the theorem of stationary phase approximation with parameters due to Hörmander \cite{H}. Let $U \subset \R^{n}\times \R^{m}$ be an open neighborhood of $(0,0),$ with coordinates $(u,v).$ Let $F \in C^{\infty}(U,\C).$  Assume that $F$ satisfies the following conditions
\begin{gather}\begin{split}\label{ConditionsForStationaryPhaseApproximation}
 \text{Re}(F)& \leq 0, \ \ \ \frac{\partial F}{\partial u}(0,0) =0, \ \ \ \text{Re}(F)(0,0)=0, \ \ \  \det\left( \frac{\partial^2 F}{\partial u^2}(0,0)\right)  \not=0.
\end{split}
\end{gather} 
Assume that $\phi \in C^{\infty}(U,\C)$ is of compact support concentrated  in a sufficiently small neighborhood of $(0,0).$ Then there exists a sequence of smooth differential operators $\{L_{F,j}\}_{ j \in \N},$ such that $L_{F,j}$ is  a differential operator in $u$ of order $2j,$ giving an asymptotic expansion
\begin{equation} \label{Est} \int \phi(u,v) e^{kF(u,v)} \dv u  \sim e^{kF^0(v)}\sqrt{\frac{2i\pi}{\det\left( k\frac{\partial^2 F}{\partial u^2}\right)^0(v)}} \sum_{j=0}^{\infty} k^{-j} L_{F,j}(\phi)^0(v),
\end{equation}
where for a function $G$ of $(u,v),$ the notation $G^0(v)$ stands for a function of $v$ only which is in the same residue class modulo the ideal generated by $\frac{\partial F}{\partial u_i},i=1,...,n.$ This means that there are smooth functions $q_j$ defined in an open neighborhood of $(0,0)$ such that on that neighborhood we have
 \begin{equation} \label{Preparation1'}
 G(u,v)= \sum_{j=1}^{n} q_j(u,v)\frac{\partial F }{\partial u_j}(u,v)+G^0(v).
 \end{equation}

We now apply this theorem to $\tr(Z^{(k)}(\varphi)).$ Recall the notion of a transversely cut out fixed point set, as defined in the introduction to this paper.

\begin{theorem} \label{Theorem2}
Assume $M^{\varphi}$ is cut out transversely. For each $\theta_j,$ define $2m_j$ as the maximal integer occuring as the dimension of a component of $M^{\varphi}$ on which $\tilde{\varphi}$ is given by multiplication by $\exp(i\theta_j).$ There exists a sequence of differential forms $\Omega_{\alpha}^j,$ giving an asymptotic expansion
\begin{equation} \label{mastereq}
 \tr \left(Z^{(k)}(\varphi)\right)\sim \sum_{j} e^{i \tilde{k} \theta_j} \tilde{k}^{m_j} \sum_{\alpha=0}^{\infty} \tilde{k}^{-\frac{\alpha}{2}} \int_{M^{\varphi}} \Omega^j_{\alpha}. 
\end{equation}
\end{theorem}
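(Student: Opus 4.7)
The plan is to apply Hörmander's stationary phase with parameters, formula \eqref{Est}, to each of the oscillatory integrals in the expansion produced by Theorem \ref{QIasOI}. So I would start from
$$\tr\!\left(Z^{(k)}(\varphi)\right) = \tilde{k}^{n_0} \sum_{n=0}^{N} \sum_{w} \tilde{k}^{-n} \int_{U_w} e^{\tilde{k} P_w^\varphi} \, \Omega_n^w + O\!\left(k^{n_0-(N+1)}\right),$$
and analyze each integral $I(w,n,k)$ separately. Since $\mathrm{Re}(P_w^\varphi) = \mathrm{Re}(\chi_w \circ R)$ is strictly negative away from $U_w \cap M^\varphi$ by \eqref{nonpositiveofchiawayfromdiag}, a standard cutoff argument allows me to localize $I(w,n,k)$ to an arbitrarily small neighborhood of $U_w \cap M^\varphi$ up to an $O(k^{-\infty})$ error.

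Next I would use the transversality hypothesis $T_p M^\varphi = \mathrm{Ker}(\dv\varphi_p - I)$, combined with Theorem \ref{NondegofHessPatrealcomplementstoFixedpointset}, which identifies $T_p M^\varphi$ precisely as the kernel of $\mathrm{Hess}(P_w^\varphi)_p$ in $T_p M_{\C}$ and guarantees non-degeneracy on any real complement. Cover each component $Y \subset M^\varphi$ of real dimension $2m_Y$ by finitely many coordinate charts of the form $(u,v)$ in which $Y = \{u = 0\}$ locally and $v$ parametrizes $Y$; the number of $u$-variables is $2n_0 - 2m_Y$. In these coordinates $\partial^2_u P_w^\varphi$ is invertible on $\{u=0\}$. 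One then verifies the hypotheses \eqref{ConditionsForStationaryPhaseApproximation}: $\mathrm{Re}(P_w^\varphi) \le 0$ follows from \eqref{nonpositiveofchiawayfromdiag}; the stationarity $\partial_u P_w^\varphi = 0$ along $M^\varphi$ is Lemma \ref{Prop1}; the vanishing of $\mathrm{Re}(P_w^\varphi)$ along $M^\varphi$ follows from $\chi_w \circ \Delta = 0$; and the determinant condition is the transversality. Moreover on $Y$ one has $P_w^\varphi|_Y = i\theta_j$, because $\chi_w(z,z) = 0$ and the fibrewise action of $\tilde{\varphi}$ along $Y$ is multiplication by $e^{i\theta_j}$.

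With these ingredients, Hörmander's formula \eqref{Est} applied to each localized integral yields an asymptotic expansion
$$\int_{U_w} e^{\tilde{k} P_w^\varphi}\, \Omega_n^w \sim e^{i\tilde{k}\theta_j}\, \tilde{k}^{-(n_0 - m_Y)} \sum_{\ell=0}^\infty \tilde{k}^{-\ell/2} \int_{Y \cap U_w} \eta^{w,Y}_{n,\ell},$$
where the half-integer powers of $\tilde{k}$ arise from the prefactor $\sqrt{(2i\pi)^{2(n_0-m_Y)} / \det(\tilde{k}\partial^2_u P_w^\varphi)^0}$ whenever the codimension of $Y$ is odd, and the integer-indexed $\ell$-series is the operator-theoretic part $L_{P_w^\varphi,\ell}$ of Hörmander's expansion. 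Inserting this into the expansion of $\tr(Z^{(k)}(\varphi))$ and multiplying by the overall $\tilde{k}^{n_0}$ gives leading factor $\tilde{k}^{m_Y}$ for each component $Y$. Grouping the contributions by their $\theta_j$-value and reorganizing by the total power of $\tilde{k}$ yields \eqref{mastereq}, where the leading power $\tilde{k}^{m_j}$ comes from the components of maximal dimension $2m_j$ among those labeled by $\theta_j$, and lower-dimensional components contribute only to the higher-order terms of the series in $\tilde{k}^{-\alpha/2}$.

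The main technical obstacle is to assemble the local data $\{\eta^{w,Y}_{n,\ell}\}$ into globally defined differential forms $\Omega_\alpha^j$ on $M^\varphi$. This requires checking that the Hörmander coefficients are independent of the choices made in Theorem \ref{QIasOI} (the holomorphic frames $s_w$, the almost-analytic extensions $\tilde{\Phi}_w$, the partition of unity subordinate to $\{U_w\}$), which follows because the coefficients $L_{F,j}(\phi)^0$ depend only on the germ of the phase and amplitude along $M^\varphi$, and because the phase $P^\varphi$ is globally well-defined modulo $2\pi i \Z$ by Theorem \ref{QIasOI}. The ambiguity by $2\pi i \Z$ is absorbed in the exponential $e^{i\tilde{k}\theta_j}$ on each component. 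All other steps are routine consequences of Lemma \ref{Prop1}, Lemma \ref{theorem 1} and Theorem \ref{NondegofHessPatrealcomplementstoFixedpointset}.
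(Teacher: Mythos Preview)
Your approach is correct and essentially identical to the paper's: start from Theorem \ref{QIasOI}, choose local coordinates $(u,v)$ adapted to $M^\varphi$, verify the hypotheses \eqref{ConditionsForStationaryPhaseApproximation} via Lemma \ref{Prop1} and Theorem \ref{NondegofHessPatrealcomplementstoFixedpointset}, apply Hörmander's parametrized stationary phase, and regroup by $\theta_j$.

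Two small corrections. First, Hörmander's expansion \eqref{Est} produces only \emph{integer} steps $\tilde k^{-j}$ after the prefactor $\tilde k^{-(n_0-m_Y)}$, so your displayed sum should read $\sum_\ell \tilde k^{-\ell}$ rather than $\sum_\ell \tilde k^{-\ell/2}$; the half-integer indexing $\tilde k^{-\alpha/2}$ in the statement simply absorbs the shift $m_Y-m_j$ when components of different dimensions are re-expressed relative to the maximal $m_j$. Second, the globalization you call the ``main technical obstacle'' is a non-issue: the forms $\Omega_n^w$ in Theorem \ref{QIasOI} already carry the partition of unity $\mu_w$, so each coefficient form produced by stationary phase has compact support in $U_w\cap M^\varphi$ and the sum $\sum_w$ defines a global form on $M^\varphi$ directly, with no need to check independence of the frames $s_w$ or the extensions $\tilde\Phi_w$ (in particular the real-analytic global phase from the second half of Theorem \ref{QIasOI} is not invoked here).
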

\begin{proof}
For notational convenience we shall assume that all the components of $M^{\varphi}$ are of the same dimension $2m_0$. Let $e_0$ be the co-dimension of $M^{\varphi},$ i.e. $2n_0-2m_0=e_0.$ Choose each holomorphic trivialization $s_w: U_w \rightarrow L\setminus \{0\}$ appearing in the covering of $M^{\varphi}$ given in \eqref{U_w} such that each $U_w$ admits smooth real coordinates $(u_w,v_w): U_w \rightarrow \R^{2n_0}$ that satisfies
\begin{equation}
\{u_w=0\} = U_w \cap M^{\varphi}.
\end{equation}
With notation as in Theorem \ref{QIasOI}, define $g^w_n\in C^{\infty}(U_w)$ by
\begin{equation}
\Omega^w_n=g^w_n \dv u_w \wedge \dv v_w.
\end{equation} We can now rewrite the expansion in Theorem \ref{QIasOI} as 
\begin{equation} \label{expaexpa}
 Z^{k}(\varphi)=  \tilde{k}^{n_0} \sum_{n=0}^M \sum_{w} \tilde{k}^{-n} \int \left( \int g^w_n(u_w,v_w) e^{\tilde{k}P_w(u_w,v_w)} \ \dv u_w \right) \dv v_w+ O\left(k^{n_0-(M+1)}\right).
\end{equation}
As the real part of $\chi$ is negative away from the diagonal we see that Theorem \ref{NondegofHessPatrealcomplementstoFixedpointset} implies that the phase function $P_w$ satisfy condition \eqref{ConditionsForStationaryPhaseApproximation}. We now wish to apply stationary phase approximation to the double integrals
\begin{equation}
I(n,w,\tilde{k})=\int \left( \int g^w_n(u_w,v_w) e^{\tilde{k}P_w(u_w,v_w)} \ \dv u_w \right) \dv v_w.
\end{equation}
 We shall use freely that a function of $k$  or $\tilde{k}$ is $O\left(k^{-M}\right)$ if and only if it is $O\left(\tilde{k}^{-M}\right).$  Let $\mathcal{I}$ be the ideal generated by
 $\frac{\partial P}{\partial u^1_w},...,\frac{\partial P}{\partial u^{e_0}_w}.$ With notation as above, we observe that for any smooth function $G(u_w,v_w),$ we have \begin{equation}
G^0(v_w)=G(0,v_w),
\end{equation} as 
\begin{equation}
\frac{\partial P}{\partial u_w}(0,v_w)=0,
\end{equation} by Lemma \ref{Prop1} and
\begin{equation}
G(u_w,v_w)-G^0(v_w) \in \mathcal{I}.
\end{equation} Thus by H\"{o}rmander's theorem of stationary phase approximation with parameters we get the following expansion
\begin{align} \label{jadadada}
&\int   g^w_n(u_w,v_w) e^{\tilde{k}P_w(u_w,v_w)}  \dv u_w+O\left(\tilde{k}^{-(N+1+e_0/2)}\right)
\\ &= e^{\tilde{k}P_w(0,v_w)}\tilde{k}^{-2^{-1}e_0}\sqrt{\frac{2i\pi}{\det\left(\frac{\partial^2 P_w}{\partial u_w^2}(0,v_w)\right)}} \sum_{r=0}^N \tilde{k}^{-r} L_{P_w,r}( g^w_n)(0,v_w),
\end{align}
where $L_{P,r}$ is the differential operator of order $2r$ as discussed above. From Lemma \ref{Prop1} we easily deduce that
\begin{equation} \label{lambadamba}
P_w(0,v_w)=i\theta_j,
\end{equation}
for some $j.$ Define $\Omega^w_{n,r}$ by
\begin{align}
\Omega^w_{n,r}(v_w)= \sqrt{\frac{2i\pi}{\det\left(\frac{\partial^2 P_w}{\partial u_w^2}(0,v_w)\right)}} L_{P_w,r}( g^w_n)(0,v_w)\ \dv v_w.
\end{align}
This form has compact support inside $U_l\cap M^{\varphi}.$ Using  \eqref{lambadamba} and \eqref{jadadada} we have the following estimate
\begin{equation} \label{jadadada2}
 I(n,w,\tilde{k})= e^{i\tilde{k}\theta_j}\tilde{k}^{-2^{-1}e_0} \sum_{r=0}^{N} \tilde{k}^{-r} \int_{M^{\varphi}} \Omega^w_{n,r}+O\left(k^{-(N+1+e_0/2)}\right).
\end{equation}
For each $\theta_j$ let $W_j$ be the set of $w'$ such that $U_{w'} \cap M^{\varphi}$ lies in a component of $M^{\varphi}$ on which $\tilde{\varphi}$ is given by multiplication by $\exp(i\theta_j).$
Define.
\begin{equation}
\Omega^j_{n,r}=\sum_{w \in W_j}  \Omega^w_{n,r}.
\end{equation}
Observe that
\begin{equation}
n_0-\frac{1}{2}e_0=m_0.
\end{equation}
Using \eqref{jadadada2}, we can rewrite \eqref{expaexpa} as follows
\begin{align} \label{crucial estimate} \begin{split}
\tr \left(Z^{(k)}(\varphi)\right)&=\tilde{k}^{m_0}\sum_{j} e^{ik\theta_j} \sum_{n,r=0}^{N} \tilde{k}^{-n-r} \int_{M^{\varphi}} \Omega^j_{n,r} +O\left(k^{m_0-(N+1)}\right) . \end{split}
\end{align}
Defining
\begin{equation}
\Omega^j_{\alpha}= \sum_{n+r=\alpha} \Omega^j_{n,r},
\end{equation}
we see that the theorem holds. \end{proof}

We are interested in the situation where the phase have more general degenerate stationary points. The most important theorems in this situation are due to Malgrange \cite{Malgrange}. The theorems we present below are also given a thorough treatment in the monograph \cite{Arnold}. Let $f \in C^{\infty}(\mathbb{R}^n,\mathbb{R}).$ Assume $f$ is real analytic near a stationary point $p.$ Assume that $\phi$ is a smooth function of compact support contained in a small neighborhood $D$ of $p.$ If $D$ is sufficiently small, we then have an asymptotic expansion
\begin{equation} \label{purelyIma}
\int _{\R^n} e^{kif(x)} \phi(x) \ \dv x \sim e^{kif(p)}\sum_{\alpha} \sum_{\beta=0}^{n-1} c_{\alpha,\beta} k^{\alpha}\log(k)^\beta,
\end{equation}
where $\alpha$ runs through a finite set of arithmetic progressions of negative rational numbers depending on $f.$ If $p$ is a maximum of $f,$ and $D$ is sufficiently small, we also have an asymptotic expansion 
\begin{equation} \label{Laplace}
\int _{\R^n} e^{kf(x)} \phi(x) \ \dv x \sim e^{kf(p)}\sum_{\alpha} \sum_{\beta=0}^{n-1} b_{\alpha,\beta} k^{\alpha}\log(k)^\beta,
\end{equation}
where $\alpha$ runs through a finite set of arithmetic progressions of negative rational numbers depending on $f.$ There are distributions $C_{\alpha,\beta}^f,B_{\alpha,\beta}^f$ supported at the level set $\{x:f(x)=f(p)\}$ such that
\begin{equation}
C_{\alpha,\beta}^f(\phi)=c_{\alpha,\beta}, \ B_{\alpha,\beta}^f(\phi)=b_{\alpha,\beta}.
\end{equation}
These distributions are of finite order.

 %The results we shall make use of are also given a thorough presentation by Arnold, Gusein-Zade and Varchenko in their monograph \cite{Arnold}.  A \textit{Laplace integral} is an integral of the form\begin{equation} I(k)= \int_{\R^n} e^{-kf(x)} \phi(x) \ \dv x,\end{equation}  where $k$ is a positive parameter, $\phi$ is a smooth complex valued function of compact support, $f$ is a smooth real valued and non-negative function defined in a neighborhood of the support of $\phi.$ The function $\phi$ is called the amplitude and the function $f$ is called the phase. We recall the following Theorem on Laplace integrals, which is Theorem $7.6$ in \cite{Arnold}. Assume that $p$ is a zero of $f.$ Assume there exists a neighborhood of $p$ on which $f$ is a real analytic function. If the support of $\phi$ is concentrated in a sufficiently small neighborhood of $p,$ then there exists a finite set of arithmetic progressions of negative rational numbers $A_1,...,A_m$ depending on the phase, and coefficients $c_{a,r}$ indexed by $\alpha \in \cup_{j=1}^m A_j$ and $\beta \in \{0,...,n-1\}$ such that $I(k)$ admits an asymptotic expansion of the form\begin{equation} \label{Laplace}\int_{\R^n} e^{-kf(x)} \phi(x) \ \dv x \sim \sum_{\alpha,\beta} c_{\alpha,\beta} k^\alpha \log(k)^\beta. \end{equation}There are distributions $C_{\alpha,\beta}^f$ supported at the level set $\{x:f(x)=0\}$ such that\[C_{\alpha,\beta}^f(\phi)=c_{\alpha,\beta}.\]

We now move on to the holomorphic case. Assume that $f$ is a holomorphic function defined on an open subset $U \subset \mathbb{C}^n.$ Assume $f$ has an isolated stationary point at $p \in U.$ Assume $\Lambda \subset U$ is an integration chain of real dimension $n,$ and $p \in \Lambda.$ Let $\Omega$ be a holomorphic $n$ form defined on $U.$ Assume that $\text{Re}(f)_{\mid \partial \Lambda} < \text{Re}(f)(p).$ If $U$ is sufficiently small, we have an asymptotic expansion
 \begin{equation} \label{HolomorphicLaplace}
\int_{\Lambda} e^{kf} \ \Omega \sim e^{kf(p)} \sum_{\alpha,\beta} c_{\alpha,\beta} k^\alpha \log(k)^\beta.
 \end{equation}
 where $\alpha$ runs through a finite set of arithmetic progressions of negative rational numbers depending on $f,$ and $\beta$ runs through a finite subset of $\mathbb{N}$ depending on $f.$ We shall briefly illustrate the main ideas. We can assume without loss of generality that $f(p)=0.$ Using the Milnor fibration of $f$ at $p,$ we can deform the chain $\Lambda$ into a Picard-Lefschetz thimble which can be foliated by so-called vanishing cycles $\gamma(t)$ indexed by a real parameter $t<0.$ These are of real dimension $n-1$ and satisfy $f_{\mid \gamma(t)}=t.$ We have
 $$\int_{\Lambda} e^{kf} \omega \approx \int_{-\infty}^{0} e^{tk} \int_{ \gamma(t)} \frac{\Omega}{\dv f} \ \dv t,$$
where $\approx$ means equality up to addition of a function of $k$ which is $O\left(k^{-m}\right)$ for every $m \in \mathbb{N}.$ One can show that the so-called Gelfand-Leray transform $F(t)$ given by $$F(t)=\int_{ \gamma(t)} \frac{\Omega}{\dv f},$$
admits a multivalued analytic extension in $t.$ For $t$ near $0,$ there exists an expansion of $F(t)$ in terms of powers of $t$ and powers of $\log(t).$ Integrating this expansion against $e^{-kt}$ gives \eqref{HolomorphicLaplace}. The Gelfand-Leray transform $F(t)$ is closely related to the monodromy operator $U_f$ of $f$ at $p,$ and a fortiori so is the expansion \eqref{HolomorphicLaplace}. We recall that the monodromy operator is defined as an anti homomorphism $U_f : \pi_1(\mathbb{C}\setminus X,z_0) \rightarrow \text{Aut}\left( H_*(f^{-1}(z_0),\mathbb{Z})\right)$ where $X$ is the discriminant of $f,$ and $z_0$ is a regular value. The action is given by deforming a cycle continuously along level sets of $f,$ by lifting via $f$ a loop $\tilde{\gamma}$ in $\mathbb{C}\setminus X.$ An exponent $\alpha$ in \eqref{HolomorphicLaplace} exponentiates to an eigenvalue $\exp(-2\pi i\alpha)$ of $U_f,$ and $c_{\alpha,\beta}=0$ if $\beta-1$ is bigger than the dimension of any Jordan block of $U_f$ associated to the eigenvalue $\exp(-2\pi i\alpha).$ For more details, see \cite{Arnold}. We remark that these holomorphic oscillatory integrals have also  been thoroughly studied from a resurgence viewpoint by several authors including Dingle, Pham, Howls, Berry-Howls, and Howls-Delabaere \cite{Dingle73,Pham83,BerryHowls90,BerryHowls91,Howls97,DelabaereHowls02}, and the starting point is to consider what happens when one allows the parameter $k$ to become complex valued. 

 %\begin{remark} Although the theorem is proven for $n\geq 2$ in \cite{Malgrange}, it is mentioned in section $6.8$ that it is also valid for $n=1,$ and a it is explained how to modify the proof. There is an elementerary way to deduce case $n=1$ from case $n=2,$ by multiplying by a Gaussian integral. \end{remark}

 We now give the proof of Theorem \ref{AnalyticalResult2}, which allows us to go beyond the case where $M^{\varphi}$ is transversely cut out.

\begin{proof}[Proof of Theorem \ref{AnalyticalResult2}]

Let $I(k)=\int_{\R^n} e^{kf(x)} \phi(x) \ \dv x.$ Without loss of generality, we can assume that $p$ is the origin, and that $f(0)=0$. We shall start by reducing the first case to the second case. By using the theorem of stationary phase approximation with parameters we show that $I(k)$ has an asymptotic expansion in terms of products of negative powers of $k$ and $1$-dimensional oscillatory integrals $I_j(k),$ all of which have the the same phase $f^0$ whose imaginary and real parts are real analytic functions. Assume that $U$ is a coordinate neighborhood centered at the fixed point. We can apply a linear transformation in order to obtain coordinates $(x,y)$ on $U \subset \R^{n-1}\times \R,$ such that $\text{Hess}(f)_0$ is non-degenerate on $T_0 \R^{n-1}=\text{Span}(\partial x_1,..., \partial x_{n-1}).$ It follows from our assumptions and Theorem \ref{NondegofHessPatrealcomplementstoFixedpointset}, that $f$ satisfies the conditions for stationary phase approximation with parameters. With the notation as above, write $g(y)=f^0(y).$ Then there are smooth functions $q_j(x,y)$ such that
 \begin{equation} \label{Preparation1'}
 f(x,y)= \sum_{j=1}^{n-1} q_j(x,y)\frac{\partial f }{\partial x_j}(x,y)+g(y).
 \end{equation}
We wish to argue that we can arrange that the functions $\text{Im}(g)$ and $\text{Re}(g)$ are both real analytic, and that $\text{Re}(g)\leq 0.$ In Proposition $7.7.13$ in \cite{H}, Hörmander  proves that the expansion \eqref{Est} is valid for any function $g'(y)$ for which there exists functions $q'_1(x,y),...,q'_{n-1}(x,y)$ such that \eqref{Preparation1'} holds near $0,$ with $g$ replaced by $g',$ and $q_j$ replaced by $q'_j.$ Hence it will suffice to prove the following assertion: Assume that the real and imaginary parts of a function $s(x,y)$ are both real analytic, and that $f_1,....,f_{n-1}$ are functions whose real and imaginary parts are both real analytic and 
\begin{equation}
\begin{cases}
& f_j(0,0)=0, \ \text{for} \ j=1,...,n-1,
\\ & \det\left( \frac{f_j}{\partial x_r}(0,0) \right) \not=0.
\end{cases} 
\end{equation}
Then there are functions $h(y),q_j(x,y),j=1,...,n-1,$ with real analytic real and imaginary parts, such that in a neighborhood of $0$ we have
\begin{equation} \label{Preparation}
 s(x,y)= \sum_{j=1}^{n-1} q_j(x,y)f_j(x,y)+h(y).
 \end{equation} We see that we can arrange that $\text{Im}(g),\text{Re}(g)$ are real analytic by using this assertion in the case where $s=f$ and $f_j=\partial f / \partial x_j.$ 

 We now prove the assertion. Shrink the domain to assume that the following power series expansions are valid near $0$
 \begin{gather}
 f_j(x,y)= \sum c^{j}_{a,b}x^ay^b, \ \ s(x,y)= \sum c_{a,b}x^ay^b, 
 \end{gather}
 where $a$ ranges over $\N^{n-1}$ and $b$ range over $\N.$  Let $U \subset \R^n$ be a small enough domain such that these series are absolutely convergent. Consider the domain $U_1+iU_2 \subset  \C^n $ with coordinates $(z,w)$ with $z=x_1+ix_2$ and $w=y_1+iy_2.$ We can extend $f_j$ and  $s$ to holomorphic functions $\check{f}_j$ and $\check{s}$ on $U_1+iU_2$ by
 \begin{gather}
 \check{f}_j(z,w)= \sum c^{j}_{a,b}z^aw^b, \ \ \check{s}(z,w)= \sum c_{a,b}z^aw^b.
 \end{gather}
Observe that
\begin{equation}
\frac{\partial \check{f}_j }{\partial z_j}(x_1,y_1)= \frac{\partial f_j}{\partial x_j}(x_1,y_1).
\end{equation}
Therefore it will suffice to prove the holomorphic version of the assertion. This can be proven using the Weierstrass preparation theorem, analogously to how the proof of Theorem $7.5.7$ in \cite{H} relies on Malgrange's preparation theorem, which is Theorem $7.5.6$ in \cite{H}.  

 Thus we obtain an asymptotic expansion, such that for all $M\in \mathbb{N}$ we have
 \begin{align}
 & \int_{\R^{n-1}} e^{kf(x,y)} \phi(x,y) \ \dv x
 \\ & =e^{kg(y)}\sqrt{\frac{2i\pi}{\det\left( k\frac{\partial^2 f}{\partial x^2}\right)^0(y)}} \sum_{j=0}^{M} k^{-j} L_{f,j}(\phi)^0(y)+O\left( k^{-M-1-\frac{n-1}{2}}\right),
\end{align}
 where $\text{Im}(g),\text{Re}(g)$ are real analytic. Observe that as $\int_{\R^{n-1}} e^{kf(x,y)} \phi(x,y) \ \dv x$ is bounded as a function of $k,$ we see that it is also true that $\text{Re}(g) \leq 0.$ As the estimate is uniform for small $y,$ we have 
\begin{equation}
\label{sstep1.1}
 I(k)=\int_{\R}e^{kg(y)}\sqrt{\frac{2i\pi}{\det\left( k\frac{\partial^2 f}{\partial x^2}\right)^0(y)}} \sum_{j=0}^{M} k^{-j} L_{f,j}(\phi)^0(y)  \ \dv y  +O\left( k^{-M-1-\frac{n-1}{2}}\right).
\end{equation} 
Introduce
\begin{equation} \label{sstep1.2}
I_j(k)= \int_{\R} e^{kg(y)}\sqrt{\frac{2i\pi}{\det\left(\frac{\partial^2 f}{\partial x^2}\right)^0(y)}}L_{f,j}(\phi)^0(y) \ \dv y,
\end{equation}
and rewrite \eqref{sstep1.1} as
\begin{equation} \label{sstep1.3}
I(k)=k^{-\frac{n-1}{2}}\sum_{j=0}^{M}k^{-j}I_j(k)+O\left( k^{-M-1-\frac{n-1}{2}} \right). 
\end{equation} 
Without loss of generality, we can assume that
\begin{equation} \label{stationarypointofp}
\frac{\dv g}{\dv y}(0)=0.
\end{equation}
Otherwise the conclusion of the theorem is true with all coefficients being $c_{\alpha}(\phi)=0.$ From \eqref{Preparation} and $\dv f_0=0$ we also deduce
\begin{equation}
g(0)=f(0,0),
\end{equation}
which implies that $\text{Re}(g)(y)$ has a maximum at $y=0.$ Write $\text{Re}(g)(y)=a_1(y).$ As we are in the one-dimensional case, a real analytic function is either constant or has isolated zeroes and isolated stationary points. Thus, by appealing to the expansions  \eqref{purelyIma} and \eqref{Laplace}, we can assume without loss of generality that the holomorphic extension $\check{g}(y)$ has an isolated stationary point at $y=0,$ and $a_1(y)$ has an isolated zero at $y=0.$ Hence the existence of an expansion can be reduced to the second case. We shall return to the form of the expansion, after having proven its existence.

We now deal with the second case. Write
\begin{equation}
\int_U e^{k f(x)} \phi(x) \  \dv x= \int_U e^{kf(x)} T(\phi)(x) \ \dv x  +\int_U e^{kf(x)} R(\phi)(x) \ \dv x 
\end{equation} 
where $T(\phi)(x)$ is a Taylor polynomial of $\phi$ at $x=0$ of very high degree, such that $R(\phi)(x)$ vanishes to very high order at $x=0,$ say to order $m.$ Let $a=\text{Re}(f).$ By the Cauchy-Schwartz inequality we have
\begin{equation} \label{kolo}
\left\lvert \int_U e^{kf(x)} R(\phi)(x) \ \dv x \right\rvert  \leq \int_U \lvert e^{kf(x)} R(\phi)(x) \rvert \ \dv x  \leq \left(\text{Vol}(U)\int_U e^{2k a(x)} (R(\phi)(x))^2 \ \dv x \right)^{\frac{1}{2}}.
\end{equation}
We can expand $J_1(k)=\int_U e^{2k a(x)} (R(\phi)(x))^2 \ \dv x$ by using Malgrange's theorem for Laplace integrals. The coefficients of that expansion will be of the form $b_{\alpha,\beta}=B_{\alpha,\beta}^{2a}(R(\phi)),$ where $B_{\alpha,\beta}^{2a}$ is a distribution of finite order supported at the level set $\{x:a(x)=0\}.$ By assumption this level set meets $\text{supp}(\phi)$ only at the origin, at which $R(\phi)(x)$ vanishes to a high order $m.$ It follows that for any $M\in \mathbb{N}$ we can choose $m$ large enough to ensure that \begin{equation} \label{dominatingR} \int_U e^{kf(x)} R(\phi)(x) \ \dv x =O\left(k^{-M}\right).\end{equation}  We can expand $J_2(k)=\int_U e^{k\check{f}(x)} T(\phi)(x) \ \dv x,$ by using Malgrange's theorem for oscillatory integrals with holomorphic phase \eqref{HolomorphicLaplace}. Here we use that we can think of $U$ as a real $n$ chain inside $\mathbb{C}^n,$ and $T(\phi)(x) \ \dv x_1 \cdots \dv x_n$ can be seen as a holomorphic $n$ form on $\mathbb{C}^n.$ Combining the expansion of $J_2(k)$ with the estimate \eqref{dominatingR} gives a partial expansion of $I(k).$ Continuing this way, by Taylor expandanding $\phi$ to all orders, we get a full asymptotic expansion of $I(k)$ of the form \eqref{HolomorphicLaplace}.

  We now return to the form of the expansion in the first case. We appeal to Example $11.1.3$ in \cite{Arnold}, where the case of an oscillatory line integral with holomorphic phase is treated. Indeed, this case reduces to the case $g(y)=y^{m},$ and it is proven that the asymptotic expansion is of the form (for holommorphic $q(z)$ and over an appropriate contour through $0$)
  $$ \int e^{ky^{m}} q(y) \ \dv y \sim \sum_{j=1}^{\infty} c_j(q) k^{-\frac{j}{m}}.$$ This finishes the proof \end{proof}

Moreover; the proof of Theorem \ref{AnalyticalResult2} provides insight on stationary phase approximation with parameters.

\begin{corollary} \label{corollary} Assume that $F:\R^n \times \R^m \rightarrow \C$ is a complex valued function which satisfy \eqref{ConditionsForStationaryPhaseApproximation}. If $F$ is real analytic near $(0,0)$, one can choose the function $F^0$ in \eqref{Est} to be real analytic near $0.$ If $F$ is real and real analytic, then $F^0$ can be chosen to be real and real analytic as well.  
\end{corollary}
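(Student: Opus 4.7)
My plan is to bootstrap on the assertion proved inside the proof of Theorem \ref{AnalyticalResult2}. Recall that Hörmander's construction in Proposition 7.7.13 of \cite{H} produces $F^0$ by solving the decomposition
\begin{equation}
F(u,v) = \sum_{j=1}^n q_j(u,v) \frac{\partial F}{\partial u_j}(u,v) + F^0(v)
\end{equation}
near $(0,0)$, and \emph{any} function $F^0(v)$ fitting into such a decomposition is admissible in the expansion \eqref{Est}. So the corollary reduces to producing such a decomposition with $F^0$ of the desired regularity.

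For the first assertion I would apply the intermediate assertion proved in Theorem \ref{AnalyticalResult2} — namely that if $s$ and the constraint functions $f_j$ have real analytic real and imaginary parts, and the relevant Jacobian is non-singular at the origin, then the preparation decomposition can be achieved with $q_j$ and the remainder $h$ also having real analytic real and imaginary parts — taking $s = F$ and $f_j = \partial F/\partial u_j$. The non-degeneracy condition $\det(\partial^2 F/\partial u^2)(0,0)\neq 0$ from \eqref{ConditionsForStationaryPhaseApproximation} supplies exactly what is needed, and the Weierstrass preparation argument given there works verbatim with $n$ constraint functions rather than $n-1$. Setting $F^0 = h$ then gives a real analytic choice.

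For the second assertion, suppose further that $F$ is real. Then each $\partial F/\partial u_j$ is real as well. Starting from the decomposition just obtained with $F^0 = h$ real analytic, take complex conjugates; using the reality of $F$ and its partial derivatives we get
\begin{equation}
F(u,v) = \sum_{j=1}^n \overline{q_j}(u,v) \frac{\partial F}{\partial u_j}(u,v) + \overline{h}(v).
\end{equation}
Averaging the two decompositions yields
\begin{equation}
F(u,v) = \sum_{j=1}^n \text{Re}(q_j)(u,v) \frac{\partial F}{\partial u_j}(u,v) + \text{Re}(h)(v),
\end{equation}
so $\text{Re}(h)$ is a valid choice of $F^0$ and is both real and real analytic.

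I do not anticipate a hard step: the genuine analytical work — the holomorphic-extension and Weierstrass-preparation argument — has already been carried out inside the proof of Theorem \ref{AnalyticalResult2}, and the corollary is essentially an observation that reality and analyticity survive that construction. The only mild point to check is that the assertion's proof, stated for $n-1$ constraint functions there, transports unchanged to $n$ of them, which it does.
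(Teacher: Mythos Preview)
Your proposal is correct. For the first assertion you proceed exactly as the paper does: both simply observe that the preparation step carried out in the proof of Theorem \ref{AnalyticalResult2} via holomorphic extension and Weierstrass preparation already yields a real analytic $F^0$, and that the argument there works for $n$ constraint functions just as for $n-1$.

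For the second assertion the two arguments diverge slightly. The paper disposes of the real case in one line by invoking the Weierstrass preparation theorem for \emph{real} analytic functions, i.e.\ by rerunning the preparation argument directly in the real category. You instead reuse the complex-valued decomposition already obtained, conjugate it, and average, exploiting that $F$ and its $u$-partials are real. Your route is arguably cleaner, since it avoids repeating any preparation-type argument and makes transparent why reality is preserved; the paper's route has the minor advantage of giving real $q_j$ intrinsically rather than via symmetrization, but this is irrelevant for the statement at hand.
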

\begin{proof}
This was proven in the first step of the proof of Theorem \ref{AnalyticalResult2} when $F$ is complex valued, and the last statement follows from the Weierstrass preparation theorem for real analytic functions. 
\end{proof}

Corollary \ref{corollary} can be used to provide a bound for powers of $\log(k)$ appearing in the asymptotic expansions \eqref{purelyIma} and \eqref{Laplace}.

\begin{corollary} \label{corollary2}
Let $f \in C^{\infty}(\mathbb{R}^n,\mathbb{R}).$ Assume $f$ is real analytic near a stationary point $p.$ Assume that $\phi$ is a smooth function of compact support contained in a small neighborhood $D$ of $p.$ Assume that $\text{Hess}(f)_p$ is non-degenerate on a subspace of dimension $m<n$ and let $q=n-m-1.$ If $D$ is sufficiently small,  we then have an asymptotic expansion
\begin{equation}
\int _{\R^n} e^{kif(x)} \phi(x) \ \dv x \sim e^{kif(p)}\sum_{\alpha} \sum_{\beta=0}^{q} c_{\alpha,\beta} k^{\alpha}\log(k)^\beta,
\end{equation}
where $\alpha$ runs through a finite set of arithmetic progressions of negative rational numbers depending on $f.$ If $p$ is a maximum of $f,$ we also have an asymptotic expansion 
\begin{equation}
\int _{\R^n} e^{kf(x)} \phi(x) \ \dv x \sim e^{kf(p)}\sum_{\alpha} \sum_{\beta=0}^{q} b_{\alpha,\beta} k^{\alpha}\log(k)^\beta,
\end{equation}
where $\alpha$ runs through a finite set of arithmetic progressions of negative rational numbers depending on $f.$
\end{corollary}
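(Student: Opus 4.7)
The plan is to reduce the $n$-dimensional integral to an integral of dimension $n-m$ by integrating out the directions on which $\mathrm{Hess}(f)_p$ is non-degenerate, and then to invoke the Malgrange expansions \eqref{purelyIma} and \eqref{Laplace} in this lower dimension, where the total number of variables is $q+1$ and the log power is at most $(q+1)-1=q$.

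First I would choose linear coordinates $(x,y)\in\R^m\times\R^{q+1}$ centered at $p$ such that $\mathrm{Hess}(f)_p$ restricted to $T_0\R^m=\mathrm{Span}(\partial x_1,\dots,\partial x_m)$ is non-degenerate. Writing $f(x,y)$ in these coordinates, the hypotheses of Hörmander's stationary phase approximation with parameters are satisfied in the $x$ variables (for the Laplace case this is \eqref{ConditionsForStationaryPhaseApproximation} with $iF$ replaced by $f$, and for the oscillatory case by applying it directly to $if$). Integrating in $x$ then yields, for every $M\in\N$, an expansion
\begin{equation}
\int_{\R^m} e^{kif(x,y)}\phi(x,y)\,\mathrm{d}x
= e^{kif^0(y)}\,k^{-m/2}\sum_{j=0}^{M}k^{-j}\,a_j(y)+O\!\left(k^{-M-1-m/2}\right),
\end{equation}
and analogously in the Laplace case, where $f^0$ is the function supplied by Hörmander's theorem and the $a_j(y)$ are smooth, compactly supported in a small neighborhood of $0\in\R^{q+1}$.

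The key input is Corollary \ref{corollary}: since $f$ is real analytic near $p$, the function $f^0$ may be taken real analytic near $0$ (and real in the Laplace case). In addition, by \eqref{Preparation1'} applied at the origin one has $\mathrm{d}f^0(0)=0$ and $f^0(0)=f(p)$, so $0$ is a stationary point of $f^0$, which is a maximum in the Laplace case because $\mathrm{Re}(f)\le 0$.

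Integrating the above expansion against $\mathrm{d}y$ and applying Malgrange's expansions \eqref{purelyIma} and \eqref{Laplace} to each integral
\begin{equation}
\int_{\R^{q+1}} e^{kif^0(y)}a_j(y)\,\mathrm{d}y,\qquad \int_{\R^{q+1}} e^{kf^0(y)}a_j(y)\,\mathrm{d}y,
\end{equation}
produces asymptotic expansions of the stated form, in which $\alpha$ ranges over finitely many arithmetic progressions of negative rationals depending on $f^0$ (hence on $f$), and $\beta$ ranges from $0$ up to $(q+1)-1=q$. Collecting all powers of $k$ that arise (the explicit $k^{-m/2-j}$ from the stationary phase step together with $k^{\alpha}$) into the single parameter $\alpha$ (still a finite union of arithmetic progressions of negative rationals) finishes the proof. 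The only non-routine ingredient is the real-analyticity of $f^0$, which is precisely Corollary \ref{corollary}; without it one could not legitimately invoke Malgrange's theorems in the reduced dimension, and so could not sharpen the log exponent bound from $n-1$ to $q$.
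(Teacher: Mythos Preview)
Your proposal is correct and is exactly the approach the paper has in mind: the paper's own proof is the single sentence ``This is a straightforward application of Corollary \ref{corollary} and the theorems of Malgrange on asymptotic expansions of oscillatory integrals presented above in \eqref{purelyIma} and \eqref{Laplace},'' and you have simply unpacked that sentence. In particular, your emphasis that the real-analyticity of $f^0$ from Corollary \ref{corollary} is the only nontrivial ingredient is precisely the point.
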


\begin{proof} This is a straightforward application of Corollary \ref{corollary} and the theorems of Malgrange on asymptotic expansions of oscillatory integrals presented above in \eqref{purelyIma} and \eqref{Laplace}. \end{proof}

We now use Theorem \ref{AnalyticalResult2} to prove our main theorem. For each $\theta_j$ let $2m_j$ be the maximal value of $\text{dim}(\text{Ker}(\dv \varphi_z-I))$ among $z \in M^{\varphi}$ with $\tilde{\varphi}_z$ given by multiplication with $\exp(2\pi i\theta_j).$

\begin{theorem} \label{Main Theorem2} Assume the pre-quantum action is real analytic. If all $z \in M^{\varphi}$ satisfy one of the following three conditions \begin{itemize}  
\item  $z$ is a smooth point with $T_z M^{\varphi}=\text{Ker}(\dv \varphi_z-I),$
\item   $\text{dim}(\text{Ker}(\dv \varphi_z -I)) \leq 1,$ or
\item  $z$ is an isolated stationary point of the germ of the holomorphic extension $\check{P}^{\varphi}$ at $z,$
\end{itemize}   we then have an asymptotic expansion of the form \[
\tr \left(Z^{(k)}(\varphi)\right) \sim  \sum_j e^{\tilde{k}i\theta_j}\tilde{k}^{n_j}\sum_{\alpha \in A_j, \beta \in B_j} c_{\alpha,\beta} \tilde{k}^{\alpha} \log(\tilde{k})^{\beta}.
 \]
  Here $n_j \in \mathbb{Q}_{\geq 0},$ each $A_j\subset \mathbb{Q}_{\leq 0}$ is a union of finitely many arithmetic progressions, and each $B_j \subset \mathbb{N}$ is finite. If for all $z \in M^{\varphi}$ the first or second condition holds, then $B_j=\{0\}$ and $n_j=m_j$ for all $j.$ \end{theorem}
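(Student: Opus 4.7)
The plan is to start from Theorem \ref{QIasOI}, which expresses $\tr(Z^{(k)}(\varphi))$ modulo $O(k^{n_0-(N+1)})$ as a finite sum $\tilde{k}^{n_0}\sum_{n,w}\tilde{k}^{-n}\int_{U_w} e^{\tilde{k}P_w^{\varphi}}\Omega_n^w$. Since $\text{Re}(P_w^{\varphi})<0$ away from $M^{\varphi}$ by \eqref{nonpositiveofchiawayfromdiag}, the asymptotics localize to arbitrarily small neighborhoods of each $z\in M^{\varphi}$ once we refine the cover $\{U_w\}$ and insert a partition of unity. This reduces the problem to the large $\tilde{k}$ behavior of integrals of the form $\int e^{\tilde{k}P_w^{\varphi}}\phi\,dx$ where $\phi$ is smooth and compactly supported in a small ball around a single fixed point $z$, to which one of the three tools below will be applied according to which hypothesis on $z$ holds.

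For the first condition, $z$ is a smooth transverse point and the argument of Theorem \ref{Theorem2} applies verbatim: Hörmander's stationary phase with parameters, justified by Theorem \ref{NondegofHessPatrealcomplementstoFixedpointset} applied to a normal slice, produces an expansion in half-integer powers of $\tilde{k}$ without logarithms, whose highest power is $\tilde{k}^{c/2}e^{\tilde{k}i\theta}$ with $2c=\dim\text{Ker}(d\varphi_z-I)$. For the second condition, Theorem \ref{NondegofHessPatrealcomplementstoFixedpointset} gives non-degeneracy of $\text{Hess}(P_w^{\varphi})_z$ on a real subspace whose codimension is $\dim\text{Ker}(d\varphi_z-I)\leq 1$; since Theorem \ref{QIasOI} guarantees real-analyticity of the real and imaginary parts of $P_w^{\varphi}$ and the real part attains a maximum at $z$, the first half of Theorem \ref{AnalyticalResult2} applies and yields an expansion of the form $e^{\tilde{k}i\theta}\tilde{k}^{-(2n_0-1)/2}\sum_{\alpha}c_{\alpha}\tilde{k}^{-\alpha/m}$. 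For the third condition, the second half of Theorem \ref{AnalyticalResult2} applies directly to give an expansion $e^{\tilde{k}i\theta}\sum_{\alpha,\beta}c_{\alpha,\beta}\tilde{k}^{\alpha}\log(\tilde{k})^{\beta}$, indexed over a finite union of arithmetic progressions for $\alpha$ and a finite subset of $\mathbb{N}$ for $\beta$.

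It remains to assemble the local contributions globally. By Lemma \ref{Prop1} together with the identification of $\theta_j$ as the phase of $\tilde{\varphi}$ on the relevant component, $P_w^{\varphi}(z)\equiv i\theta_j\pmod{2\pi i\mathbb{Z}}$ whenever $z$ lies in the $\theta_j$-component; grouping the local expansions by $\theta_j$ gives the outer sum $\sum_j e^{\tilde{k}i\theta_j}\tilde{k}^{n_j}(\cdots)$. In the first two cases, the absence of logarithmic factors is supplied either by classical stationary phase or by Corollary \ref{corollary2}, and the power count (the $\tilde{k}^{n_0}$ prefactor combined with $\tilde{k}^{-n_0+c/2}$ from non-degeneracy in the directions transverse to $\text{Ker}(d\varphi_z-I)$) shows that the leading exponent over the $\theta_j$-stratum is exactly $m_j$, whence $B_j=\{0\}$ and $n_j=m_j$. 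In the third case, finiteness of $B_j$ and the arithmetic-progression structure of $A_j$ follow from Malgrange's theorem and Corollary \ref{corollary2}; the exponents arise from the monodromy of $\check{P}^{\varphi}$ at $z$.

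The main obstacle is handling the second condition: one must verify that the geometric hypothesis $\dim\text{Ker}(d\varphi_z-I)\leq 1$ really furnishes the analytic hypothesis of Theorem \ref{AnalyticalResult2} (non-degeneracy of the Hessian on a codimension-one subspace), which is the point at which the second half of Theorem \ref{NondegofHessPatrealcomplementstoFixedpointset} is essential. A secondary subtlety is that the reduction step in the proof of Theorem \ref{AnalyticalResult2} must preserve real-analyticity of the reduced phase, which is handled via Hörmander's complex-analytic preparation argument; I will need to invoke that reduction so that the $1$-dimensional Laplace/oscillatory expansions of Malgrange apply to the reduced phase produced by stationary phase in the transverse directions.
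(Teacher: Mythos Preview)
Your proposal is correct and follows essentially the same route as the paper: start from Theorem~\ref{QIasOI}, localize via \eqref{nonpositiveofchiawayfromdiag}, and then treat the three cases by Theorem~\ref{Theorem2}, the first part of Theorem~\ref{AnalyticalResult2} (combined with Theorem~\ref{NondegofHessPatrealcomplementstoFixedpointset}), and the second part of Theorem~\ref{AnalyticalResult2}, respectively. One point you gloss over that the paper makes explicit: in the third case, the second half of Theorem~\ref{AnalyticalResult2} requires not only that $\check{P}^{\varphi}$ have an isolated stationary point at $z$, but also that $\text{Re}(P^{\varphi})-\text{Re}(P^{\varphi})(z)$ have an isolated zero there; this follows because any nearby maximum of $\text{Re}(P^{\varphi})$ is a fixed point by \eqref{nonpositiveofchiawayfromdiag}, hence a stationary point of $P^{\varphi}$ by Lemma~\ref{Prop1}, hence a stationary point of $\check{P}^{\varphi}$, and so must equal $z$ by the isolation hypothesis.
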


\begin{proof}

By Theorem \ref{QIasOI} we have \begin{equation}\label{Expansion of Trace}\begin{split}
\tr \left(Z^{(k)}(\varphi)\right) =\tilde{k}^{n_0} \sum_{n=0}^{N} \sum_{w }\left( \int_{U_w} e^{\tilde{k}P^{\varphi}} \ \Omega^w_n \right) \tilde{k}^{-n} +O\left(k^{n_0-(N+1)}\right). \end{split}
\end{equation} 
Hence it will be enough to show the existence of an asymptotic expansion of integrals of the following form 
\[
I(\tilde{k})=\int_U e^{\tilde{k}P^{\varphi}(x)} \phi(x) \ \dv x,
\]
where $U$ is an open neighborhood of a fixed point $z$, and $\phi$ is a smooth function of compact support concentrated in a small coordinate ball centered at $z$.  The case where $z$ is a smooth point of $M^{\varphi}$  with $T_z M^{\varphi}=\text{Ker}(\dv \varphi_z-I)$ is dealt with in Theorem \ref{Theorem2}. The case with $\text{dim}\text{Ker}(\dv\varphi_z-I) \leq 1,$ can be dealt with by appealing to the first part of Theorem \ref{AnalyticalResult2} and Theorem \ref{NondegofHessPatrealcomplementstoFixedpointset}. The case where $\check{P}^\varphi$ has an isolated stationary case is dealt with by appealing to the second part of Thereom \ref{AnalyticalResult2}. Here we use that any maximum of $\text{Re}(P^{\varphi})$ is a fixed point by \eqref{nonpositiveofchiawayfromdiag} and therefore a stationary point of $P^{\varphi}$ by Lemma \ref{Prop1}, and a fortiori a stationary point of $\check{P}^{\varphi}.$
\end{proof}

Before moving on to TQFT, let us briefly compare Theorem \ref{AnalyticalResult2} with the theorems of Malgrange.
\begin{remark} \label{Comparison}
The expansions \eqref{purelyIma} and \eqref{Laplace}  provided by the results of Malgrange, are valid for $f \in C^{\infty}(\mathbb{R}^n,\mathbb{R})$ which is real analytic near a stationary point or a maximum. Theorem \ref{AnalyticalResult2} deal with $f \in C^{\infty}(\mathbb{R}^n,\mathbb{C}),$ with real analytic real and imaginary part near a stationary point, which is also a maximum of the real part. For the first part of Theorem \ref{AnalyticalResult2} we need to impose a condition on the Hessian. For the second part of Theorem \ref{AnalyticalResult2} we can not appeal directly to the case with holomorphic phase \eqref{HolomorphicLaplace}, as we are not assuming that the amplitude form $\phi \ \dv x$ extends to a holomorphic form on $\mathbb{C}^n,$ which is an assumption in Malgranges result.
\end{remark}

\begin{remark} \label{MMM} For the integer $m$ appearing in \eqref{ourresult}, $m-1$ is the vanishing order of $f^0(y)-f^0(p)$ at $y=p,$ where $f^0(y)$ is the function given by Hörmander's stationary phase with parameters. Observe that if $n=1,$ the condition on the Hessian is vacuously true, and $f^0=f,$ so $m-1$ is the vanishing order of $f(y)-f(p)$ at $p.$ We remark that the leading order term of the expansion \eqref{ourresult} is in agreement with what is to be expected following the work of Varchenko \cite{Varchenko} and Vasil'ev \cite{VaVassiliev}. See also chapter $11$ of \cite{Arnold}.
\end{remark}

\section{TQFT and quantization of moduli spaces} \label{TQFT}

In these sections, we shall apply the results obtained above to quantum representations arising in relation to topological quantum field theory.

\subsection{The co-prime case} 

Recall the setup of the co-prime case from the introduction where ${\mathcal M}$ is the moduli space of flat $\rm{SU}(n)$-connections on the complement of a point $P$ on $\Sigma$, around which the holonomy is a given generator $\delta$ of the center of $\rm{SU}(n)$. It is shown in \cite{AJHM} that the setup of the co-prime case fit together to form a pre-quantum action where the symplectic manifold is $\mathcal M$ and $\T$ is Teichmüller space of $\Sigma$ and $\Gamma$ is the mapping class group of $(\Sigma,P,v_P),$ where $v_P$ is a tangent direction at $P$ . 

This is seen to be a real analytic action as follows. Consider the corresponding character variety $X$ in this co-prime case for the complexified group ${\rm SL}(n,{\mathbb C})$, which carries a natural complex structure coming from ${\rm SL}(n,{\mathbb C})$. Once we choose a point $\sigma\in \T$ we get a holomorphic family of rank $n$ semi-stable complex vector bundles with trivial determinant parametrized by an open neighborhood $V$ of ${\mathcal M}$ in $X$. This gives a holomorphic map from $V$ to ${\mathcal M}_\sigma$, which therefore restricts to a real analytic isomorphism from ${\mathcal M}$ to ${\mathcal M}_\sigma$, showing that the Narasimhan-Seshadri identification is real-analytic. Concerning the pre-quantum Chern-Simons line bundle, we see that it is the restriction of a holomorphic line bundle to ${\mathcal M}$, thus also real analytic. Further, the Takhtajan-Zograf formula \cite{ZT87,ZT88,ZT89,ZT91} for its Hermitian structure proves its real analyticity.

 Let $\varphi$ be a mapping class group element, let $T_{\varphi}$ the mapping torus of $(\Sigma, \varphi),$ and let $L\subset T_{\varphi}$ be the link traced out by the puncture of $\Sigma$. Let $\tilde{\varphi}$ be the lift to the Chern-Simons line bundle, covering the action on $\M.$ It is shown in \cite{AJHM}, that there is a unique choice of $\tilde{\varphi}$ such that that on a component $Y$ of the fixed point set $\M^{\varphi},$ the lift is given by multiplication by $\exp(2\pi i \theta),$ where $\theta$ is a value of the Chern-Simons functional on the moduli space  $\M_{T_{\varphi}}.$ For any $x \in \rho^{-1}(Y),$ where $\rho$ is the natural surjective map $\M_{T_{\varphi}}\rightarrow \M^{\varphi},$ we have $\theta=S_{\text{CS}}(x).$
 
 \begin{proof}[Proof of Theorem \ref{1.1} and of Theorem \ref{MainTheorem2}] Theorem \ref{1.1} follows from Theorem \ref{QIasOI} and Lemma \ref{Prop1}, whereas Theorem \ref{MainTheorem2} follows from Theorem \ref{Theorem2} and Theorem \ref{Main Theorem2}.  \end{proof}

\subsection{The $\text{SU}(2)$ moduli space of a punctured torus}

Let $\Sigma_1^1$ be a torus with a puncture, with mapping class group $\Gamma_{(\bar\Sigma_1^1, v_P)}.$ Let $a,b$ be the standard generators of $\pi_1(\Sigma_1^1).$ For $l\in [-2,2]$ let $\M_l$ be the moduli space of flat $\text{SU}(2)$ bundles over $\Sigma_1^1,$ for which the trace of the holonomy along a small loop encircling the puncture is $l.$ The trace coordinates on the character variety construction of the full moduli space gives an identification
\begin{equation}
\mathcal{M}_l \overset{\sim}{\longrightarrow} \{ (x,y,z) \in \R^3: x^2+y^2+z^2-2-xyz=l\}.
\end{equation}
For more details, see Goldman's article \cite{Goldman97}. For $l=-2,$ the space $\mathcal{M}_l$ consists of a single point, for $l\in (-2,2)$ it is smooth and diffeomorphic to $S^2,$ while  $\M_2$ is isomorphic to $\mathbb{T}^2/\{\pm1 \},$ and contains four singularities. This case is known as the pillow case. It is clear that the action of $\Gamma_{(\bar\Sigma_1^1, v_P)}$  resctricts to an action on $\mathcal{M}_l$ for all $l.$ Brown has shown \cite{Brown08} that if $\varphi$ is a pseodo-Anosov homeomorphism, then the fixed point set $\mathcal{M}^{\varphi}$ is a real algebraic subset of dimension $1,$ that meets $\mathcal{M}_l$ transversely for almost all $l.$

Let $t_a \in \Gamma_{(\bar\Sigma_1^1, v_P)}$  be the Dehn-twist about $a,$ and let $t_b \in \Gamma_{(\bar\Sigma_1^1, v_P)}$  be the Dehn-twist about $b.$ We have a explicit expression of the action of $t_a,t_b$ on the moduli space $\text{M}_l$ in terms of the coordinates $(x,y,z)$ introduced above. As explained in \cite{Goldman97} we have 
\begin{equation} \label{coordinateexpressionsforDehntwists}
t_a(x,y,z)=(x,z,xz-y), \ \ \ t_b(x,y,z)=(xy-z,y,x).
\end{equation}
Consider the composition
\begin{equation} \label{mappeclass}
\varphi=t_b^{-1}\circ t_a.
\end{equation}
It follows by Penner's work \cite{Penner88} that $\varphi$ is a pseudo-Anosov homeomorphism. 

We shall consider the action of $\varphi,\varphi^2$ on $\mathcal{M}_{-1/4}.$ Let
\begin{equation}
p=\left(-1,\frac{1}{2},\frac{1}{2}\right),
\ \ \ q= \left(\frac{1}{2},-1,-1\right).
\end{equation}
The fixed point set of $\varphi$ as well as the fixed point set of $\varphi^2$ is equal to $\{p,q\},$ and we have
\begin{align} \label{fixvarphi1}  \text{dim}(\text{Ker}(\dv \varphi_p-I)) &= \text{dim}(\text{Ker}(\dv \varphi_q-I))=0, 
\\ \text{dim}(\text{Ker}(\dv \varphi^2_p-I))&=\text{dim}(\text{Ker}(\dv \varphi^2_q-I))=1.
\end{align}
These assertions are proven by a direct calculation presented in the Appendix. Thus we have identified a pseudo-Anosov homeomorphism $\varphi$ whose fixed point set is cut out transversely, whereas $\varphi^2$ satisfy the conditions of Theorem \ref{MainTheorem2} but whose fixed point set it not cut out transversely. 

The general construction of a Hitchin connection in \cite{Andersen12} applies to the case of $\mathcal{M}_l$, for all $l\in (-2,2)$, with its Chern-Simons line bundle constructed in \cite{AJHM} and its family of complex structures parametrized by Teichm\"{u}ller space $\mathcal{T}$ of $\Sigma_1^1$ constructed in the works of Daskalopoulos and Wentworth and Mehta and Seshadri \cite{DaskalopoulosWentworth,MehtaSeshadri}. The mapping class group $\Gamma_{(\bar\Sigma_1^1, v_P)}$  acts on this setup and the same argument as in the co-prime case shows that this is in fact a real analytic action. Further, we get in this case, since the complex dimension of $\mathcal{T}$ is one, by Theorem 4.8 of \cite{AndersenGammelgaard14} that any of the Hitchin connections provided by the construction of the first named author in \cite{Andersen12} is projectively flat. Picking one of these projectively flat Hitchin connections, we get a projective representation of $\Gamma_{(\bar\Sigma_1^1, v_P)}.$ For any $\varphi\in \Gamma_{(\bar\Sigma_1^1, v_P)}$  and any $l\in (-2,2),$ let
$$Z_l^{(k)}(\varphi) \in GL(H_\sigma^{(k)}),$$  
be any linear lift for some $\sigma\in \mathcal{T}$, as constructed in Section \ref{Holomorphic quantization} for the situation at hand. In this case, parallel transport also has an asymptotic expansion in terms of Toeplitz operators as in \eqref{VIP}. 

\begin{proof}[Proof of Theorem \ref{NiceResult}] This is an application of Theorem \ref{Theorem2} and Theorem \ref{Main Theorem2} to this situation, together with the proof of Proposition $5.1$ in \cite{Brown08}, which shows that for generic $l,$ $\M_{l}^{\varphi}$ is finite and transversely cut out.\end{proof}

\section*{Appendix} \label{Appendix}

We analyze the fixed point set of the mapping class \eqref{mappeclass} in detail in this appendix. Introduce the function $L(x,y,z)=x^2+y^2+z^2-2-xyz,$ such that $\M_l$ is the level-set $L(x,y,z)=l.$ We start by giving a coordinate expression for the action of $\varphi.$ We observe that $t_b^{-1}(x,y,z)=(z,y,zy-x),$ as is easily deduced from \eqref{coordinateexpressionsforDehntwists}. From this we get the expression
\begin{equation} \label{coordinateexpressionforvarphi}
\varphi(x,y,z)=(xz-y,z,z(xz-y)-x). 
\end{equation}
From the coordinate expression \eqref{coordinateexpressionforvarphi} one verifies that
\begin{equation}
\varphi(p)=p, \ \ \ \varphi(q) =q.
\end{equation}
As any fixed point of $\varphi$ is a fixed point of $\varphi^2,$ we can prove the set-theoretical identities by arguing that
\begin{equation}
\mathcal{M}_{-1/4}^{\varphi^2} \subset \{p,q\}.
\end{equation}
Observe that $q'$ is a fixed point of $\varphi^2$  if and only if
\begin{equation} 
\varphi(q')=\varphi^{-1}(q').
\end{equation}
From the coordinate expression \eqref{coordinateexpressionforvarphi} one sees that
\begin{equation} \label{expofinveser}
\varphi^{-1}(x,y,z)=(yx-z,y(yx-z)-x,y).
\end{equation}
Thus we see by comparing that $(x,y,z)$ is a fixed point of $\varphi^2$ if and only if
\begin{equation} \label{fixedpointsequ.}
xz-y=yx-z, \  y(yx-z)-x =z, \  z(zx-y)-x =y.
\end{equation}
The first of the equations appearing in \eqref{fixedpointsequ.} is equivalent to
\begin{equation} \label{x(z-y)=y-z}
x(z-y)=y-z.
\end{equation}
Now, let $(x,y,z)$ be a fixed point. From \eqref{x(z-y)=y-z} we see that we must either have that $x=-1,$ or that $z=y.$

 We shall start by assuming that $x=-1.$ Hence the two lower equations appearing in \eqref{fixedpointsequ.} reads
\begin{equation} \label{temlighemlig}
-y^2-yz+1=z, \ -z^2-yz+1=y,
\end{equation}
from which we learn that
\begin{equation}
y-z=-z^2-yz+1-(-y^2-yz+1)=y^2-z^2=(y-z)(y+z).
\end{equation}
Hence we can assume that either $y=z$ or $y+z=1.$ If $y=z$ we have
\begin{align}
0=L(-1,y,y)+\frac{1}{4}=3y^2-\frac{3}{4}.
\end{align}
This equation has exactly two solutions given by $y=-1/2$ and $y=1/2.$
However, we see that the identities $ y=z=-1/2$ would violate \eqref{temlighemlig}. It follows that we must have $y=z=\frac{1}{2},$ which corresponds to the solution $p.$ Assume now that $z+y=1.$ Then we have
\begin{align}
0=L(-1,y,1-y)+\frac{1}{4}=y^2-y+1/4.
\end{align}
This equation has only one solution $y=1/2,$ which again corresponds to the point $p.$

We now assume that $z=y.$ The second equation of \eqref{fixedpointsequ.} now reads $z^2x-z^2-x=z.$ This is equivalent to $(z^2-1)x=z+z^2,$ which can be rewritten as
\begin{equation} \label{rerere}
(z+1)(z-1)x=z(z+1).
\end{equation}
Observe that this implies that $z\not=1.$ If $z=-1,$ we get
\begin{align}
0=L(x,-1,-1)+\frac{1}{4}=x^2-x+1/4.
\end{align}
This has precisely one solution namely $x=1/2,$ and we recover the solution $p.$  Assume now that $z\not=-1.$ Then \eqref{rerere} implies that
\begin{equation} \label{niahahahah}
x=\frac{z}{z-1}.
\end{equation}
Thus we get
\begin{align}
0=L(z/(z-1),z,z)+\frac{1}{4}= \frac{(z+1)(2z-1)(2z^2-7z+7)}{(z-1)^2}.
\end{align}
This equation has exactly two real solutions $z=-1$ and $z=1/2.$ As we have discarded $-1,$ this implies that $z=1/2,$ and from  \eqref{niahahahah} we conclude $x=-1.$ This corresponds to the point $q.$

We have now proven the claims about the fixed point sets \eqref{fixvarphi1},  and it remains to compute the differentials. We start with some general considerations. From \eqref{coordinateexpressionforvarphi} we compute that with respect to the $\partial x, \partial y, \partial z$ basis of $T \R^3$ we have
\begin{equation} \label{Differential}
\dv \varphi =\begin{pmatrix} z & -1 & x
\\ 0 & 0 & 1
\\ z^2-1 & -z & 2xz-y \end{pmatrix}.
\end{equation}
We have
\begin{equation} \label{Tangentspace}
T_p \mathcal{M}_{-1/4}=\text{Ker}(\dv L),
\end{equation}
and
\begin{equation} \label{DL}
\dv L=( 2x-yz,2y-xz,2z-xy).
\end{equation}

We now turn to the point $p.$ As
\begin{equation}
\dv L_p=\left( 0,\frac{-3}{2}, \frac{-3}{2}\right),
\end{equation}
we conclude that
\begin{equation}
T_p \mathcal{M}_{-1/4}= \R \partial x \oplus \R(\partial y-\partial z).
\end{equation}
With respect to the basis $\partial x, \partial y, \partial z$ of $T_p \R^3$ we have
\begin{equation} 
\dv \varphi_p  = \begin{pmatrix} -1 & -1 & 1/2
\\ 0 & 0 & 1
\\ 0 & 1 & 0 \end{pmatrix}.
\end{equation}
Introduce the basis 
\begin{equation}
v_1 =\partial x, \ \ \  v_2 =\partial y- \partial z.
\end{equation}
With respect to the $(v_1,v_2)$ basis we have
\begin{equation}
\dv \varphi_p = \begin{pmatrix} -1 & -3/2 & 
\\ 0 & -1 
\end{pmatrix}.
\end{equation}
It follows that $1$ is not an eigenvalue of $\dv \varphi_p.$ We now consider $\dv \varphi^2_p.$ With respect to the basis $(v_1,v_2)$ it has matrix given by
\begin{equation}
\dv \varphi^2_p = \begin{pmatrix} 1 & 3
\\ 0 & 1
\end{pmatrix}.
\end{equation}
It follows that
\begin{equation}
\text{Dim}(\text{Ker}(\dv \varphi^2_p-I))=1.
\end{equation}

We now turn to the point $q. $ We have
\begin{equation}
\dv \varphi_q = \begin{pmatrix} 1/2 & -1 & -1
\\ 0 & 0 & 1
\\ -3/4 & -1/2 & -3/2 \end{pmatrix}.
\end{equation}
We have
\begin{equation}
\dv L_q= \left(\frac{-9}{4},\frac{3}{2}, \frac{3}{2}\right).
\end{equation}
Thus we have 
\begin{equation}
T_q \mathcal{M}_{-1/4}=\R(\partial y-\partial z)\oplus \R(\partial x+3/2\partial y).
\end{equation}
Introduce the basis
\begin{equation}
w_1 =\partial y-\partial z, \ \ \  w_2 = \partial x+3/2\partial y.
\end{equation}
We have
\begin{equation}
\dv \varphi_q (w_1)=-w_1, \ \ \ \dv \varphi_q (w_2)=3w_1-w_2.
\end{equation}
Thus we see that with respect to $w_1,w_2,$ we have \begin{equation}
\dv \varphi_q = \begin{pmatrix} -1 & 3 
\\ 0 & -1 
\end{pmatrix},
\ \ \ \dv \varphi^2_q \sim \begin{pmatrix} 1 & -6 
\\ 0 & 1
\end{pmatrix}.
\end{equation}
This concludes the analysis.

%
%\printbibliography{bibo}

%\newpage
\bibliography{bibo2}
\
\

\noindent 
       Jørgen Ellegaard Andersen \\
      Center for Quantum Geometry of Moduli Spaces\\
      Department of Mathematics\\
        University of Aarhus\\
        DK-8000, Denmark\\
     andersen{\@@}qgm.au.dk
     \\\\
     William Elbæk Petersen \\
      Center for Quantum Geometry of Moduli Spaces\\
      Department of Mathematics\\
        University of Aarhus\\
        DK-8000, Denmark\\
    william{\@@}qgm.au.dk

\end{document}